\numberwithin{equation}{section}
\newtheorem{theorem}{Theorem}[section]
\newtheorem{lemma}[theorem]{Lemma}
\newtheorem{proposition}[theorem]{Proposition}
\newtheorem{definition}[theorem]{Definition}
\begin{document}

\author{Ravshan Ashurov}
\address{Ashurov R: V. I. Romanovskiy Institute of Mathematics Uzbekistan Academy of Sciences, University street 9, Tashkent, 100174, Uzbekistan.}
\email{ashurovr@gmail.com}

\author{Elbek Husanov}
\address{Husanov E: V. I. Romanovskiy Institute of Mathematics Uzbekistan Academy of Sciences, University street 9, Tashkent, 100174, Uzbekistan.}
\email{elbekhusanov02@gmail.com}

\small

\title[Inverse problem of determining  ...] {Inverse problem of determining a time-dependent coefficient in the time-fractional subdiffusion equation}

\begin{abstract}

This paper explores the forward and inverse problems for a fractional subdiffusion equation characterized by time-dependent diffusion and reaction coefficients. Initially, the forward problem is examined, and its unique solvability is established. Subsequently, the inverse problem of identifying an unknown time-dependent reaction coefficient is addressed, with rigorous proofs of the existence and uniqueness of its solution. Both problems' existence and uniqueness are demonstrated using Banach's contraction mapping theorem. Notably, this work is the first to investigate direct and inverse problems for such equations with time-dependent coefficients.

{\it Key words}: Subdiffusion equation, Caputo fractional derivative, forward problem, inverse problem.
\end{abstract}

\maketitle

\section{INTRODUCTION}
\qquad The Gerasimov–Caputo fractional derivative of order $0 < \rho < 1$ for the function $h(t)$ is defined as (see, e.g., \cite{KilbasSrivstavaTrujillo} p. 92)
$$
D_t^{\rho} h(t) = \frac{1}{\Gamma(1 - \rho)} \int_0^t \frac{h'(\eta)}{(t - \eta)^\rho} \, d\eta, \quad t > 0,
$$
provided the right-hand side exists, where $\Gamma(\rho)$ is Euler's gamma function.    

Consider the time-fractional diffusion equation in the domain $(0,l)\times(0,T]$,
\begin{equation} \label{1.1}
    D_{t}^{\rho}u(x,t)-\sigma(t)u_{xx}(x,t)+q(t)u(x,t)=f(x,t),
\end{equation}
with the initial condition
\begin{equation} \label{1.2}
    u(x,0)=\phi(x),\qquad x \in [0,l],
\end{equation}
and boundary conditions
\begin{equation} \label{1.3}
u(0,t)=u(l,t)=0,\,\,\,\,\,\phi(0)=\phi(l)=0,\qquad t \in [0,T],
\end{equation}
where $\sigma(t)$, $q(t)$, $f(x,t)$ are given smooth functions.

In this paper, if all the coefficients $\sigma(t)$, $q(t)$ and $f(x,t)$ are given, then problem \eqref{1.1}–\eqref{1.3} is called a forward problem. On the other hand, if the function $q(t)$ is unknown, then the problem of determining the pair of functions $\{u(x,t);q(t)\}$ is referred to as an inverse problem. To solve the inverse problem, we adopt the following additional condition:

\begin{equation} \label{1.4}
    {{\left. {{u}_{x}}(x,t) \right|}_{x=0}}= \psi(t), \qquad t\in [0,T],
\end{equation}
where $\psi(t)$ is given function.

The recovery of time- and space-dependent parameters in the diffusion equation has been studied by several authors (see, e.g., \cite{Kaban}). Inverse problems for parabolic equations are often investigated by comparison with inverse problems for hyperbolic equations. This approach is applicable to inverse problems with data on a portion of the boundary, including the quasi-stationary inverse problem for Maxwell’s equations \cite{Romanov}. The coefficient recovery problem is closely related to ill-posed problems in mathematical physics, and the main solution methods were developed by A. N. Tikhonov, M. M. Lavrentiev, A. I. Prilepko, V. G. Romanov, and S. I. Kabanikhin (see, e.g., \cite{Romanov}).

In problems similar to the one under consideration, both forward and inverse problems have been extensively studied for integer and fractional orders of derivatives under various additional conditions. Below, we briefly summarize some relevant results.

For the case $\rho = 1$ and $\sigma(t) = 1$, the existence and uniqueness of solutions to inverse problems concerning the determination of the coefficient $q$ have been thoroughly investigated in the works of N. Ya. Beznoshchenko \cite{Beznoshchenko1}–\cite{Beznoshchenko3}. Similarly, Kozhanov \cite{Kozhanov} also examined inverse problems for the determination of the coefficient $q$.

When $\sigma(t) = 1$ and $\rho \in (0,1)$, D. K. Durdiev et al. \cite{Durdiev1}–\cite{DurdievRahmonovBozorov} studied the determination of the coefficient $q$ under various additional conditions, analyzing the local existence of solutions. For $\rho \in (0,2)$ with $\rho \neq 1$, the problem of determining $q$ was considered by M. Yamamoto and L. Miller \cite{MillerYamamoto}, while for $\rho \in (1,2)$, this problem was addressed in \cite{TingWei}.

In the case where $\sigma(t)$ is time-dependent and $\rho \in (0,1)$, both forward problems and inverse problems for determining the coefficient $\sigma(t)$ have been investigated \cite{Zhang}–\cite{SerikbaevRuzhanskyTokmagambetov}. Additionally, \cite{SerikbaevRuzhanskyTokmagambetov2} analyzed the inverse problem related to the determination of the problem’s right-hand side.

A review of the existing literature indicates that, in the general case where $\sigma(t)$ and $q(t)$ are time-dependent functions, forward and inverse problems for equation \eqref{1.1} have not yet been explored. Therefore, this paper aims to fill this gap by analyzing the theoretical aspects of the problem for the case of time-dependent coefficients $\sigma(t)$ and $q(t)$. In particular, the existence, uniqueness, and principal properties of solutions to both forward and inverse problems are examined.

\section{PRELIMINARY MATERIALS} 

\subsection{Mittag-Leffler function and some of its important properties} In this subsection, we recall the definition of the generalized Mittag-Leffler function and present the essential properties that will be used in our analysis.

\begin{definition}\label{def2.1} (see, \cite{Dzhrbashyan}, p. 42). The two-parameter Mittag–Leffler function $E_{\rho ,\beta } (z)$ is defined as follows:
$$E_{\rho ,\beta } (z) = \sum _{k=0}^{\infty }\frac{z^{k}}{\Gamma \left(k\rho +\beta \right)}, \quad z \in {\mathbb C},$$
where $0 < \rho < 2$, $\beta \in \mathbb{C}$, and $\Gamma(\cdot)$ denotes the Gamma function.
\end{definition}

\begin{lemma}\label{lem2.2} (see \cite{Dzhrbashyan}, p. 134). If $0 < \rho < 2$, then the following estimate holds for any $t \ge 0$:
$$
|E_{\rho, \beta}(-t)| \le \frac{C}{1+t},
$$
where $C$ is a positive constant number, and $\beta$ is a real number.    
\end{lemma}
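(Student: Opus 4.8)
The plan is to split the range of $t$ into the compact interval $[0,1]$ and the ray $[1,\infty)$, establish the bound on each piece, and then amalgamate. On $[0,1]$ one needs only regularity: since $E_{\rho,\beta}$ is entire, the map $t\mapsto E_{\rho,\beta}(-t)$ is continuous on the compact set $[0,1]$ and hence bounded there, say $|E_{\rho,\beta}(-t)|\le M_1$. Because $\tfrac{1}{1+t}\ge\tfrac12$ for $t\in[0,1]$, this already gives $|E_{\rho,\beta}(-t)|\le 2M_1\,\tfrac{1}{1+t}$ on that interval, with no use yet of the constraint $\rho<2$.

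The substance of the lemma is the decay on $[1,\infty)$, and here the plan is to invoke the Hankel-type contour representation
\[
E_{\rho,\beta}(-t)=\frac{1}{2\pi i\,\rho}\int_{\gamma}\frac{\zeta^{(1-\beta)/\rho}\,e^{\zeta^{1/\rho}}}{\zeta+t}\,d\zeta ,
\]
where $\gamma=\gamma(\varepsilon,\delta)$ is the standard loop formed by the two rays $\arg\zeta=\pm\delta$ together with the arc $|\zeta|=\varepsilon$, the opening angle being chosen so that $\tfrac{\pi\rho}{2}<\delta<\min\{\pi,\pi\rho\}$. Such a $\delta$ exists (and automatically satisfies $\delta<\pi$) precisely because $\rho<2$; this is the one place where the hypothesis is indispensable, since it is what places the point $-t$, whose argument equals $\pi$, into the sector $\delta\le|\arg z|\le\pi$ of algebraic rather than exponential behaviour of $E_{\rho,\beta}$, so that the representation above carries no exponential term. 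Along the rays one has $\arg(\zeta^{1/\rho})=\pm\delta/\rho$ with $\delta/\rho>\tfrac{\pi}{2}$, whence $\mathrm{Re}(\zeta^{1/\rho})=|\zeta|^{1/\rho}\cos(\delta/\rho)<0$ and the factor $e^{\zeta^{1/\rho}}$ decays exponentially; this guarantees convergence of the integral and furnishes the estimates needed below. Moreover, since $\delta<\pi$, the rays are angularly separated from the direction of $-t$, so that $|\zeta+t|\ge c(\delta)\,(|\zeta|+t)$ uniformly in $t\ge1$.

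To extract the rate, I would expand the Cauchy kernel as $\tfrac{1}{\zeta+t}=\tfrac1t-\tfrac{\zeta}{t(\zeta+t)}$ and integrate termwise. The leading term reproduces a Hankel representation of $1/\Gamma(\beta-\rho)$ (seen by the substitution $w=\zeta^{1/\rho}$), giving $E_{\rho,\beta}(-t)=\tfrac{1}{t\,\Gamma(\beta-\rho)}+R(t)$, where the convention $1/\Gamma=0$ at the poles of $\Gamma$ simply removes the leading term and accelerates the decay when $\beta-\rho$ is a nonpositive integer. The remainder $R(t)$ is controlled by bounding its integrand along $\gamma$ with the exponential decay and the separation estimate just noted, which yields $|R(t)|\le M'/t^{2}$. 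Hence $|E_{\rho,\beta}(-t)|\le M_2/t$ for $t\ge1$, and since $\tfrac1t\le\tfrac{2}{1+t}$ there, we obtain $|E_{\rho,\beta}(-t)|\le 2M_2\,\tfrac{1}{1+t}$. Taking $C=2\max\{M_1,M_2\}$ merges the two regimes into the asserted inequality. I expect the principal obstacle to be the rigorous, uniform estimation of $R(t)$: choosing $\varepsilon$ and $\delta$ as functions of $\rho$, splitting $\gamma$ into its rays and arc, and tracking the branch of $\zeta^{1/\rho}$ so that $|R(t)|=O(t^{-2})$ holds uniformly for all $t\ge1$; the remaining bookkeeping of constants is routine.
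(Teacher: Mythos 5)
The paper offers no proof of this lemma: it is quoted verbatim as a known result from Dzhrbashyan's book (p.~134; it is also Theorem~1.6 in Podlubny). Your sketch is, in outline, exactly the classical argument given there: boundedness on a compact initial interval from entirety, plus the Hankel-type contour representation on the ray $|\arg z|\ge\mu$ with $\pi\rho/2<\mu<\min\{\pi,\pi\rho\}$, the expansion $\frac{1}{\zeta+t}=\frac1t-\frac{\zeta}{t(\zeta+t)}$ producing the $1/(t\,\Gamma(\beta-\rho))$ leading term and an $O(t^{-2})$ remainder. You correctly identify the single place where $\rho<2$ is indispensable (nonemptiness of the admissible range for the contour's opening angle), and your separation estimate $|\zeta+t|\ge c(\delta)(|\zeta|+t)$ is right. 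The proposal is a sketch rather than a complete proof --- the uniform bound on $R(t)$ is deferred, as you acknowledge --- but it contains no wrong steps and matches the cited source's method; there is nothing in the paper to contrast it with.
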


\begin{lemma}\label{lem2.3} (see \cite{SakamotoYamamoto}) Let $\lambda > 0$, $\rho > 0$ and a positive integer $m \in \mathbb{N}$. Then the following formula holds:
$$
\frac{d^m}{dt^m} E_{\rho,1}(-\lambda t^{\rho}) = -\lambda t^{\rho-m} E_{\rho,\rho-m+1}(-\lambda t^{\rho}), \quad t > 0.
$$
\end{lemma}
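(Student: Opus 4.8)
The plan is to prove the identity by differentiating the defining power series of $E_{\rho,1}$ term by term and then reindexing so as to recognize the result as another Mittag–Leffler function. By Definition \ref{def2.1},
$$E_{\rho,1}(-\lambda t^{\rho}) = \sum_{k=0}^{\infty} \frac{(-\lambda)^{k}\, t^{k\rho}}{\Gamma(k\rho+1)},$$
so the whole computation rests on the monomial rule $\ds\frac{d^{m}}{dt^{m}} t^{k\rho} = \frac{\Gamma(k\rho+1)}{\Gamma(k\rho-m+1)}\, t^{k\rho-m}$, valid for $t>0$ and any real exponent $k\rho$. The Gamma-ratio here automatically vanishes whenever $t^{k\rho}$ is a monomial of non-negative integer degree below $m$ (in particular for the constant term $k=0$), because the denominator $\Gamma$ then meets one of its poles; this is exactly the convention that makes the rule hold uniformly in $k$.

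Granting that term-by-term differentiation is legitimate, I would substitute this rule into the series. The factor $\Gamma(k\rho+1)$ cancels against the denominator, leaving
$$\frac{d^{m}}{dt^{m}} E_{\rho,1}(-\lambda t^{\rho}) = \sum_{k=1}^{\infty} \frac{(-\lambda)^{k}\, t^{k\rho-m}}{\Gamma(k\rho-m+1)},$$
the $k=0$ term having dropped out as noted. Shifting the summation index via $k=j+1$ then pulls out an overall factor $-\lambda\, t^{\rho-m}$, and the remaining sum $\ds\sum_{j=0}^{\infty}\frac{(-\lambda t^{\rho})^{j}}{\Gamma\!\left(j\rho+(\rho-m+1)\right)}$ is precisely $E_{\rho,\rho-m+1}(-\lambda t^{\rho})$ by Definition \ref{def2.1}. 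This gives the asserted expression $-\lambda\, t^{\rho-m} E_{\rho,\rho-m+1}(-\lambda t^{\rho})$.

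The one point requiring genuine care — and hence the main obstacle — is justifying the term-by-term differentiation. For this I would use that $E_{\rho,\beta}$ is an entire function of its argument (for $\rho>0$ the series converges on all of $\mathbb{C}$), so for each fixed $m$ the formally differentiated series has coefficients of order $1/\Gamma(k\rho-m+1)$ and converges uniformly on every compact subinterval of $(0,\infty)$; since $t\mapsto t^{\rho}$ is smooth on $(0,\infty)$, differentiation under the summation sign is then valid and the identity holds pointwise for $t>0$. A clean alternative is induction on $m$, the base case $m=1$ being the computation above that returns $E_{\rho,\rho}$. I would nevertheless prefer the direct route, since the induction forces one to carry the extra $t^{\rho-1}$-type prefactors through each stage, whereas the single reindexing disposes of all orders $m$ at once.
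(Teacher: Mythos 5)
Your proof is correct. Note that the paper does not prove Lemma \ref{lem2.3} at all --- it is quoted from Sakamoto--Yamamoto with only a citation --- so there is no in-paper argument to compare against; your term-by-term differentiation of the series, with the monomial rule $\frac{d^m}{dt^m}t^{k\rho}=\frac{\Gamma(k\rho+1)}{\Gamma(k\rho-m+1)}t^{k\rho-m}$, the observation that the $1/\Gamma$-at-poles convention kills exactly the terms whose derivative genuinely vanishes, and the reindexing $k=j+1$, is the standard and complete derivation. The justification of differentiation under the sum via locally uniform convergence of the differentiated series on compact subsets of $(0,\infty)$ is also adequate.
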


In particular, for $m = 1$, the formula becomes:
$$
\frac{d}{dt} E_{\rho,1}(-\lambda t^{\rho}) = -\lambda t^{\rho - 1} E_{\rho,\rho}(-\lambda t^{\rho}).
$$

\begin{proposition}\label{prop2.4} (see \cite{AshurovYusuf21})
Let $\lambda > 0$ and $\rho > 0$. Then the following integral identity holds:
$$
\int\limits_{0}^{t} \lambda \eta^{\rho-1} E_{\rho,\rho}(-\lambda \eta^{\rho}) \, d\eta = 1 - E_{\rho,1}(-\lambda t^{\rho}).
$$

\end{proposition}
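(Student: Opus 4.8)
The plan is to recognize the integrand as an exact derivative and then apply the fundamental theorem of calculus. Indeed, the special case $m=1$ of Lemma~\ref{lem2.3}, recorded just above the proposition, states that
$$
\frac{d}{d\eta} E_{\rho,1}(-\lambda \eta^{\rho}) = -\lambda \eta^{\rho-1} E_{\rho,\rho}(-\lambda \eta^{\rho}), \qquad \eta > 0,
$$
so the integrand $\lambda \eta^{\rho-1} E_{\rho,\rho}(-\lambda \eta^{\rho})$ is precisely $-\frac{d}{d\eta} E_{\rho,1}(-\lambda \eta^{\rho})$. First I would substitute this identity into the integral, which turns the left-hand side into $-\int_0^t \frac{d}{d\eta} E_{\rho,1}(-\lambda \eta^{\rho})\, d\eta$.

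Next I would integrate, obtaining $-\bigl[E_{\rho,1}(-\lambda t^{\rho}) - \lim_{\eta \to 0^+} E_{\rho,1}(-\lambda \eta^{\rho})\bigr]$. To evaluate the lower endpoint I would use the series of Definition~\ref{def2.1}: since every term with $k \ge 1$ vanishes at argument $0$, we get $E_{\rho,1}(0) = 1/\Gamma(1) = 1$, and by continuity of $z \mapsto E_{\rho,1}(z)$ together with $-\lambda \eta^{\rho} \to 0$ as $\eta \to 0^+$, the limit equals $1$. Collecting terms then yields $1 - E_{\rho,1}(-\lambda t^{\rho})$, which is the claimed identity.

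The only genuinely delicate point is justifying the integration near $\eta = 0$, because the antiderivative formula from Lemma~\ref{lem2.3} is asserted only for $\eta > 0$ and the factor $\eta^{\rho-1}$ is singular when $\rho < 1$. I would dispose of this by noting that $E_{\rho,\rho}(-\lambda \eta^{\rho})$ is bounded near $\eta = 0$ (it tends to $1/\Gamma(\rho)$), so the integrand behaves like $\eta^{\rho-1}/\Gamma(\rho)$ and is absolutely integrable for $\rho > 0$; hence the improper integral converges and the fundamental theorem of calculus applies on $[\epsilon, t]$ followed by passing to the limit $\epsilon \to 0^+$, using the continuity of $E_{\rho,1}$ at $0$ established above. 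I expect this integrability and limiting argument to be the main (though modest) obstacle; everything else is a direct substitution.
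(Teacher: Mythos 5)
Your argument is correct. The paper does not prove Proposition~\ref{prop2.4} at all --- it simply cites it from \cite{AshurovYusuf21} --- so there is no in-paper proof to compare against; your route via the $m=1$ case of Lemma~\ref{lem2.3}, the fundamental theorem of calculus on $[\epsilon,t]$, the value $E_{\rho,1}(0)=1/\Gamma(1)=1$, and the integrability of $\eta^{\rho-1}$ near $0$ for $\rho>0$ is the standard derivation of this identity and is complete as stated. (An equivalent alternative, worth knowing, is to integrate the defining series of $E_{\rho,\rho}$ term by term, which gives the same telescoping into $1-E_{\rho,1}$ without invoking the derivative formula.)
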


\begin{lemma}\label{lem2.5} (see \cite{SerikbaevRuzhanskyTokmagambetov}) 
If $\rho \in (0,1]$, $\lambda>0$ and $\beta \ge \rho$, then the Mittag-Leffler function $E_{\rho ,\beta}(-\lambda t^{\rho})$ satisfies the following inequality:
$$
0 \le E_{\rho ,\beta}(-\lambda t^{\rho}) \le \frac{1}{\Gamma(\beta)}, \quad t \ge 0.
$$

\end{lemma}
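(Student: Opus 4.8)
The plan is to reduce the two-sided estimate to a statement about the single-variable function $g(x):=E_{\rho,\beta}(-x)$, $x\ge 0$, since the substitution $x=\lambda t^{\rho}$ sweeps out all of $[0,\infty)$ as $t$ ranges over $[0,\infty)$ (recall $\lambda>0$). From Definition \ref{def2.1} we have $g(0)=E_{\rho,\beta}(0)=1/\Gamma(\beta)$, so the claimed upper bound is precisely the value of $g$ at the left endpoint. The whole lemma therefore amounts to showing that $g$ is non-negative and attains its maximum at $x=0$; I would deduce both facts from the single structural property that $g$ is completely monotone on $[0,\infty)$ whenever $0<\rho\le 1$ and $\beta\ge\rho$.

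The non-negativity of $g$ is the main obstacle. I would establish it through the classical spectral (Laplace-type) representation of the Mittag–Leffler function: for $0<\rho<1$ one writes the defining series as a Hankel-contour integral and collapses the contour onto the negative real axis to obtain
$$
E_{\rho,\beta}(-x)=\int_{0}^{\infty}e^{-x s}\,K_{\rho,\beta}(s)\,ds,\qquad x\ge 0,
$$
with an explicit kernel $K_{\rho,\beta}$ arising from the jump of $E_{\rho,\beta}$ across the cut. The delicate point—exactly where the hypotheses $0<\rho\le 1$ and $\beta\ge\rho$ enter—is to verify that $K_{\rho,\beta}(s)\ge 0$ for all $s>0$; this is the Pollard–Schneider positivity argument, and once it is in place Bernstein's theorem identifies $g$ as completely monotone and in particular non-negative. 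The boundary case $\rho=1$ is handled separately and elementarily, e.g. $E_{1,1}(-x)=e^{-x}>0$, with the higher-$\beta$ cases following from the recursion below.

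For the upper bound I would avoid differentiating the representation and instead use the three-term recursion
$$
E_{\rho,\beta}(z)=\frac{1}{\Gamma(\beta)}+z\,E_{\rho,\rho+\beta}(z),
$$
which follows at once by shifting the summation index $k\mapsto k+1$ in Definition \ref{def2.1}. Setting $z=-\lambda t^{\rho}$ gives
$$
E_{\rho,\beta}(-\lambda t^{\rho})=\frac{1}{\Gamma(\beta)}-\lambda t^{\rho}\,E_{\rho,\rho+\beta}(-\lambda t^{\rho}).
$$
Since $\rho+\beta\ge\rho$, the non-negativity established in the previous step applies to the factor $E_{\rho,\rho+\beta}(-\lambda t^{\rho})$, and because $\lambda t^{\rho}\ge 0$ the subtracted term is non-negative; hence $E_{\rho,\beta}(-\lambda t^{\rho})\le 1/\Gamma(\beta)$. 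Combined with the non-negativity, this yields the full two-sided estimate for every $t\ge 0$. (Equivalently, complete monotonicity gives $g'\le 0$, so $g$ is non-increasing and $g(x)\le g(0)=1/\Gamma(\beta)$; I prefer the recursion route, as it is purely algebraic once positivity has been secured.)
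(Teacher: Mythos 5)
The paper offers no proof of this lemma at all: it is quoted directly from \cite{SerikbaevRuzhanskyTokmagambetov}, so there is nothing internal to compare your argument against. On its own merits, your sketch follows the standard route. The index-shift identity $E_{\rho,\beta}(z)=\tfrac{1}{\Gamma(\beta)}+z\,E_{\rho,\rho+\beta}(z)$ is correctly derived from Definition \ref{def2.1} and correctly applied: the shifted parameter $\rho+\beta$ again satisfies the hypothesis $\ge\rho$, so once non-negativity is known the upper bound follows. The issue is that the entire content of the lemma is concentrated in the one step you do not carry out, namely the positivity of the spectral kernel $K_{\rho,\beta}(s)$ for all $\beta\ge\rho$. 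Invoking ``Pollard--Schneider'' by name leaves your proof exactly as much a citation as the paper's, and the caution is not merely formal: the collapsed-contour density involves factors of the type $\sin(\pi\beta)$ and $\sin(\pi(\beta-\rho))$, which change sign as $\beta$ grows, so its non-negativity for \emph{all} $\beta\ge\rho$ is genuinely delicate and is usually proved only for $\rho\le\beta\le 1$ before being extended. A cleaner and fully elementary way to cover $\beta>1$ is the Beta-integral subordination $E_{\rho,\beta}(-x)=\tfrac{1}{\Gamma(\beta-1)}\int_0^1 E_{\rho,1}(-xu^{\rho})(1-u)^{\beta-2}\,du$, which, combined with Pollard's one-parameter theorem $0\le E_{\rho,1}(-y)\le 1$, yields both bounds at once since $\int_0^1(1-u)^{\beta-2}\,du=\tfrac{1}{\beta-1}$.

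A second, concrete error: your handling of $\rho=1$ does not work as stated. The recursion sends $\beta\mapsto\beta+\rho$, i.e.\ it \emph{raises} the second parameter, so it cannot reduce the ``higher-$\beta$ cases'' to $E_{1,1}(-x)=e^{-x}$. The subordination identity above (or the classical complete monotonicity of $E_{1,\beta}(-x)$ for $\beta\ge 1$) closes that case; with it, the boundary value $\rho=1$ needs no separate treatment at all.
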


\subsection{Some auxiliary concepts and results} In this subsection, we recall several auxiliary notions and results which will be used in the sequel.

\begin{lemma} \label{lem2.9} (see, \cite{Zhang2}) Let $0<\rho<1$ and $v=v(t) \in C[0,T]$ with $D_t^{\rho}v(t)\in C[0,T]$. Then the following statements hold: \\
$(a)$ If $t_0 \in   (0,T]$ and $v({{t}_{0}})=\underset{t\in [0,T]}{\mathop{\max }}\,v(t)$, then we have $D_t^{\rho}v(t) \ge 0$;\\
$(b)$ If $t_0 \in   (0,T]$ and $v({{t}_{0}})=\underset{t\in [0,T]}{\mathop{\min }}\,v(t)$, then we have $D_t^{\rho}v(t) \le 0$.    
\end{lemma}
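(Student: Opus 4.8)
The plan is to reduce part (b) to part (a) at once: if $v$ attains its minimum over $[0,T]$ at $t_0$, then $-v$ attains its maximum there, and since the Gerasimov--Caputo derivative is linear we get $D_t^{\rho}v(t_0) = -D_t^{\rho}(-v)(t_0) \le 0$ as soon as (a) is known. So I would concentrate entirely on (a), where $v(t_0)=\max_{t\in[0,T]}v(t)$ with $t_0\in(0,T]$, and establish $D_t^{\rho}v(t_0)\ge 0$.

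The core idea is to rewrite the Caputo derivative evaluated at $t_0$ so that every term carries a transparent sign. Since $v(t_0)$ is constant in the integration variable $\eta$, I would replace $v'(\eta)$ by $(v(\eta)-v(t_0))'$ in the defining integral and set $h(\eta):=v(\eta)-v(t_0)$, so that $h(\eta)\le 0$ on $[0,T]$ and $h(t_0)=0$. Integrating by parts, using $\tfrac{d}{d\eta}(t_0-\eta)^{-\rho}=\rho(t_0-\eta)^{-\rho-1}$, I would arrive at
$$
D_t^{\rho}v(t_0)=\frac{1}{\Gamma(1-\rho)}\left[\Big(v(t_0)-v(0)\Big)t_0^{-\rho}-\rho\int_0^{t_0}\frac{v(\eta)-v(t_0)}{(t_0-\eta)^{\rho+1}}\,d\eta\right].
$$
The point of this rearrangement is that each piece is now nonnegative: the prefactor $1/\Gamma(1-\rho)$ is positive for $0<\rho<1$; the boundary contribution $(v(t_0)-v(0))t_0^{-\rho}\ge 0$ because $v(t_0)$ is the global maximum; and in the integral the numerator $v(\eta)-v(t_0)\le 0$ while the kernel $(t_0-\eta)^{-\rho-1}>0$, so the whole term $-\rho\int_0^{t_0}\cdots$ is also nonnegative. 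Summing gives $D_t^{\rho}v(t_0)\ge 0$, which is the claim.

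The main obstacle is justifying the integration by parts, specifically the vanishing of the boundary term at the upper endpoint $\eta=t_0$, where $(t_0-\eta)^{-\rho}$ is singular, together with the convergence of the resulting improper integral $\int_0^{t_0}(v(\eta)-v(t_0))(t_0-\eta)^{-\rho-1}\,d\eta$. Both hinge on controlling the modulus of $h$ near $t_0$: one needs $h(\eta)=o\big((t_0-\eta)^{\rho}\big)$ as $\eta\uparrow t_0$, which is plausible since heuristically $h(\eta)\sim v'(t_0)(\eta-t_0)$ and $1-\rho>0$ dominates the singularity. To keep the argument rigorous under the stated hypotheses $v\in C[0,T]$ and $D_t^{\rho}v\in C[0,T]$, I would perform the integration by parts on $[0,t_0-\varepsilon]$ and pass to the limit $\varepsilon\to 0^+$, verifying that the boundary term at $t_0-\varepsilon$ tends to zero and invoking monotone (or dominated) convergence for the integral. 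This limiting step is precisely where the continuity of $D_t^{\rho}v$ is used, and it is the delicate part that must be handled with care.
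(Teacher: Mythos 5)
The paper does not prove this lemma at all --- it is quoted from \cite{Zhang2} --- so there is no in-paper argument to compare against; your proposal has to stand on its own. What you describe is the classical Luchko extremum-principle argument, and its skeleton is sound: the reduction of (b) to (a) by linearity is correct, the integration-by-parts identity
$\Gamma(1-\rho)\,D_t^{\rho}v(t_0)=(v(t_0)-v(0))t_0^{-\rho}-\rho\int_0^{t_0}(v(\eta)-v(t_0))(t_0-\eta)^{-\rho-1}\,d\eta$
is right, and the sign analysis of each term is correct. You have also put your finger on exactly the right difficulty, namely the boundary term $h(t_0-\varepsilon)\varepsilon^{-\rho}$ at the singular endpoint.

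That step, however, is left genuinely open, and the two justifications you gesture at do not close it. The heuristic $h(\eta)\sim v'(t_0)(\eta-t_0)$ presupposes differentiability of $v$ at $t_0$, which is not among the hypotheses ($v\in C[0,T]$ only), and ``continuity of $D_t^{\rho}v$'' is not the lever that makes the boundary term vanish. The correct mechanism is the absolute integrability implicit in the existence of $D_t^{\rho}v(t_0)$: writing $h(t_0-\varepsilon)=-\int_{t_0-\varepsilon}^{t_0}v'(\eta)\,d\eta$ and inserting $1=(t_0-\eta)^{-\rho}(t_0-\eta)^{\rho}$ gives
\begin{equation*}
|h(t_0-\varepsilon)|\;\le\;\varepsilon^{\rho}\int_{t_0-\varepsilon}^{t_0}\frac{|v'(\eta)|}{(t_0-\eta)^{\rho}}\,d\eta\;=\;o(\varepsilon^{\rho}),
\end{equation*}
by absolute continuity of the Lebesgue integral of $|v'(\eta)|(t_0-\eta)^{-\rho}$. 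This forces the boundary term to zero along every sequence $\varepsilon\to0^{+}$, and then, since $I_\varepsilon$ converges to $\Gamma(1-\rho)D_t^{\rho}v(t_0)$ while the remaining two terms are nonnegative (the integral term being monotone in $\varepsilon$), the limit of the singular integral is automatically finite and the conclusion $D_t^{\rho}v(t_0)\ge0$ follows. Without some estimate of this kind your Case ``$\int_0^{t_0}(-h)(t_0-\eta)^{-\rho-1}\,d\eta=+\infty$'' cannot be excluded, and the argument stalls at an $\infty-\infty$ cancellation. One further cosmetic point: the reduction of (b) to (a) tacitly uses $D_t^{\rho}(-v)=-D_t^{\rho}v$, which is immediate from the definition but worth stating, since the lemma as printed in the paper also has a typo ($D_t^{\rho}v(t)$ should read $D_t^{\rho}v(t_0)$) that your write-up silently corrects.
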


\begin{theorem} \label{thm2.10} \textbf{Banach fixed point theorem.} (see, \cite{KilbasSrivstavaTrujillo}, 84 p.) Let $X$ be a Banach space and let $A : X \to X$ be a map such that  
$$
\|Au - Av\| \leq \beta \|u - v\|, \quad (0 < \beta < 1),
$$
holds for all $u, v \in X$. Then the operator $A$ has a unique fixed point $u^* \in X$, that is, $Au^* = u^*$.
\end{theorem}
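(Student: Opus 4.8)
The plan is to use the classical method of successive approximations (Picard iteration), drawing on the completeness of $X$ to produce the fixed point and on the contraction property to secure both convergence and uniqueness. First I would fix an arbitrary starting point $u_0 \in X$ and define the sequence by $u_{n+1} = A u_n$ for $n \ge 0$. The goal is then to show that $(u_n)$ converges in $X$ and that its limit is the sought fixed point.

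The central step is to prove that $(u_n)$ is a Cauchy sequence. By a straightforward induction using the hypothesis, one obtains $\|u_{n+1} - u_n\| = \|A u_n - A u_{n-1}\| \le \beta \|u_n - u_{n-1}\| \le \cdots \le \beta^n \|u_1 - u_0\|$. Then for any $m > n$ the triangle inequality together with the geometric series gives
$$
\|u_m - u_n\| \le \sum_{k=n}^{m-1} \|u_{k+1} - u_k\| \le \|u_1 - u_0\| \sum_{k=n}^{m-1} \beta^k \le \frac{\beta^n}{1-\beta}\, \|u_1 - u_0\|.
$$
Since $0 < \beta < 1$, the right-hand side tends to $0$ as $n \to \infty$, so $(u_n)$ is Cauchy. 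This is the step I expect to carry the whole argument: the decay estimate and the summation of the geometric tail are where the contraction constant is genuinely exploited, and everything that follows is comparatively routine.

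Because $X$ is a Banach space, the Cauchy sequence $(u_n)$ converges to some limit $u^* \in X$. To verify that $u^*$ is a fixed point, I would note that the contraction estimate makes $A$ Lipschitz, hence continuous; passing to the limit in the recurrence $u_{n+1} = A u_n$ then yields $u^* = \lim_{n\to\infty} u_{n+1} = \lim_{n\to\infty} A u_n = A u^*$.

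Finally, for uniqueness, suppose $u^*$ and $v^*$ are both fixed points. Then $\|u^* - v^*\| = \|A u^* - A v^*\| \le \beta \|u^* - v^*\|$, so that $(1-\beta)\|u^* - v^*\| \le 0$; since $\beta < 1$ this forces $\|u^* - v^*\| = 0$, that is $u^* = v^*$. This completes the scheme.
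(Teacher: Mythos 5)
Your proof is correct and complete: the Picard iteration, the geometric-series Cauchy estimate, the passage to the limit via continuity of the contraction, and the uniqueness argument are all carried out properly. The paper itself does not prove this statement --- it is quoted as a classical result with a citation to Kilbas--Srivastava--Trujillo --- and your argument is precisely the standard textbook proof one would find there, so there is nothing to reconcile.
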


\section{INVESTIGATION OF FORWARD PROBLEM}

\qquad In this section, we consider \eqref{1.1}–\eqref{1.3}, establish the existence and uniqueness of the solution, and derive certain regularity results. Throughout this section, we assume that the functions $\sigma(t)$, $q(t)$, $\phi(x)$ and $f(x,t)$ satisfy the following conditions:\\
\textbf{Assumption 1.} We assume that: \\
$(1)\qquad \sigma(t)\in {{C}^{+}}\left[0,T\right]:=\left\{ \sigma (t)\in C\left[ 0,T \right]:\,0<m_{\sigma}\le\sigma(t) \le M_{\sigma},\,\,\,t\in \left[ 0,T \right] \right\}$; \\
$(2)\qquad q(t)\in\left\{  q(t) \in C[0,T] :\,-\frac{m_{\sigma}{{\pi }^{2}}}{{{l}^{2}}}< n_q \le q(t) \le N_q< \frac{(M_{\sigma}-m_{\sigma}){{\pi }^{2}}}{{{l}^{2}} }\right\}$; \\
$(3) \qquad f(x,t)\in C([0,T];W_2^3(0,l)), \;\; with \;\; f(0,t)=f(l,t)={{\left. \frac{{{\partial }^{2}}f(x,t)}{\partial {{x}^{2}}} \right|}_{x=0}}={{\left. \frac{{{\partial }^{2}}f(x,t)}{\partial {{x}^{2}}} \right|}_{x=l}}=0$;\\
$(4) \qquad \phi(x)\in W_2^1(0,l), $\\
where $m_{\sigma}$, $M_{\sigma}$, $n_q$ and $N_q$ are constants.
 
\begin{definition}\label{def3.2} A function $u(x,t) \in {C([0,T] \times [0,l])}$ with the properties $D_t^{\rho}u(x,t)$, $u_{xx}(x,t) \in {C((0,T]\times [0,l])}$ and satisfying conditions \eqref{1.1}–\eqref{1.3} is called the classic solution of the forward problem.
\end{definition}

The main theorem of this section, related to problems \eqref{1.1}–\eqref{1.3}, is presented below.

\begin{theorem} \label{thm3.3} Let Assumption 1 be satisfied. Then the problem \eqref{1.1}–\eqref{1.3} has a unique solution $u(x,t)$.
\end{theorem}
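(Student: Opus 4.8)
The plan is to solve the problem by the Fourier method, reducing it to a system of integral equations for the coefficients in the expansion with respect to the Dirichlet eigenfunctions $X_k(x)=\sqrt{2/l}\,\sin(k\pi x/l)$, $\lambda_k=(k\pi/l)^2$, of the operator $-\partial_{xx}$ on $(0,l)$. Writing $u(x,t)=\sum_k u_k(t)X_k(x)$, $\phi(x)=\sum_k\phi_k X_k(x)$ and $f(x,t)=\sum_k f_k(t)X_k(x)$, the difficulty is that $\sigma(t)$ and $q(t)$ are time-dependent, so the equation for each $u_k$ has non-constant coefficients and no closed-form Mittag–Leffler solution. To get around this I would freeze the diffusion coefficient at its upper bound $M_\sigma$ and move the remainder to the right-hand side, rewriting \eqref{1.1} as $D_t^\rho u-M_\sigma u_{xx}=(\sigma(t)-M_\sigma)u_{xx}-q(t)u+f$. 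Applying the variation-of-parameters (Duhamel) formula for the constant-coefficient fractional equation to each mode yields the fixed-point equation
$$u_k(t)=\phi_k E_{\rho,1}(-M_\sigma\lambda_k t^\rho)+\int_0^t(t-s)^{\rho-1}E_{\rho,\rho}(-M_\sigma\lambda_k(t-s)^\rho)\big[(M_\sigma-\sigma(s))\lambda_k u_k(s)-q(s)u_k(s)+f_k(s)\big]\,ds,$$
which defines an operator $A$ whose fixed point will be the solution.

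Next I would fix the Banach space $X$ as a weighted space of coefficient sequences, with a norm of the form $\|u\|^2=\sum_k(1+\lambda_k)^{s}\sup_{t\in[0,T]}|u_k(t)|^2$ for a suitable exponent $s$ (large enough that the series for $u_{xx}$ and $D_t^\rho u$ converge to continuous functions), and show $A:X\to X$. Here Assumption 1 does the work: $\phi\in W_2^1$ controls $\sum\lambda_k|\phi_k|^2$, while $f\in C([0,T];W_2^3)$ together with the compatibility conditions $f(0,t)=f(l,t)=f_{xx}(0,t)=f_{xx}(l,t)=0$ provides the extra decay of $f_k(t)$ needed for the forcing term; the Mittag–Leffler bounds of Lemmas \ref{lem2.2} and \ref{lem2.5} (with $E_{\rho,\rho}(-z)\ge 0$) keep the kernel under control and give continuity in $t$.

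The heart of the argument is the contraction estimate. For two sequences the affine terms $\phi_k$ and $f_k$ cancel, leaving
$$|Au_k(t)-A\tilde u_k(t)|\le\sup_s\big|(M_\sigma-\sigma(s))\lambda_k-q(s)\big|\int_0^t(t-s)^{\rho-1}E_{\rho,\rho}(-M_\sigma\lambda_k(t-s)^\rho)\,ds\;\sup_s|u_k(s)-\tilde u_k(s)|.$$
By Proposition \ref{prop2.4} the kernel integral equals $\frac{1}{M_\sigma\lambda_k}\big(1-E_{\rho,1}(-M_\sigma\lambda_k t^\rho)\big)\le\frac{1}{M_\sigma\lambda_k}$, so the mode-$k$ Lipschitz constant is at most $\frac{1}{M_\sigma\lambda_k}\max\{(M_\sigma-m_\sigma)\lambda_k-n_q,\;N_q\}$. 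This is precisely where the two-sided restriction on $q$ in Assumption 1(2) is used: the lower bound $n_q>-m_\sigma\pi^2/l^2$ forces $(M_\sigma-m_\sigma)\lambda_k-n_q<M_\sigma\lambda_k$, and the upper bound $N_q<(M_\sigma-m_\sigma)\pi^2/l^2$ forces $N_q<M_\sigma\lambda_k$, for every $k\ge 1$ (the worst case being $k=1$). Hence there is a single $\beta<1$ with $\|Au-A\tilde u\|\le\beta\|u-\tilde u\|$, and Theorem \ref{thm2.10} yields a unique fixed point.

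Finally I would translate the fixed point back into a function $u(x,t)$, verify by termwise differentiation that it satisfies \eqref{1.1}–\eqref{1.3} in the classical sense of Definition \ref{def3.2}, and note that uniqueness in $X$ (or, alternatively, a maximum-principle argument via Lemma \ref{lem2.9}) gives uniqueness of the classical solution. The two main obstacles I anticipate are: establishing the uniform-in-$k$ contraction constant, which hinges on the delicate calibration of the bounds on $q$ against $m_\sigma$, $M_\sigma$ and $\lambda_1=\pi^2/l^2$; and justifying the regularity, i.e. that the series defining $u_{xx}$ and $D_t^\rho u$ converge to continuous functions on $(0,T]\times[0,l]$, which requires carefully matching the weight $s$ in the norm of $X$ to the smoothness and compatibility assumptions on $\phi$ and $f$.
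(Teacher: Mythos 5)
Your fixed-point core coincides with the paper's: the same freezing of the diffusion coefficient at $M_\sigma$, the same Duhamel integral equation \eqref{3.4}, and the same use of Proposition \ref{prop2.4} to evaluate the kernel integral as $\frac{1}{\lambda_k^2M_\sigma}\bigl(1-E_{\rho,1}(-\lambda_k^2M_\sigma t^\rho)\bigr)$, with the calibration of $n_q$ and $N_q$ against $m_\sigma$, $M_\sigma$ and $\pi^2/l^2$ producing the contraction constant. The paper runs the contraction mode by mode in $C[0,T]$ rather than in a weighted sequence space, but since the operator is diagonal in $k$ and your Lipschitz constant is uniform in $k$, that part of your plan is sound.

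The genuine gap is in the passage from the fixed point to a classical solution. You propose to choose the weight $s$ ``large enough that the series for $u_{xx}$ and $D_t^\rho u$ converge,'' but no such $t$-uniform weight exists under Assumption 1: $\phi$ is only in $W_2^1(0,l)$, so $\sum_k\lambda_k^2|\phi_k|$ may diverge, while at $t=0$ the homogeneous part of $Au_k$ reduces to $\phi_k$; hence $A$ cannot map into itself any space that encodes two spatial derivatives uniformly on $[0,T]$. What is actually needed is a smoothing-in-time gain of order $\lambda_k^{-2}t^{-\rho}$ for $t>0$. For a constant-coefficient mode equation this would follow from the explicit solution together with $E_{\rho,1}(-\lambda_k^2m_\sigma t^\rho)\le C/(1+\lambda_k^2m_\sigma t^\rho)$ (Lemma \ref{lem2.2}), but your $u_k$ solves a variable-coefficient equation with no closed form, so the bound is not automatic and your proposal offers no mechanism to obtain it. The paper supplies exactly this missing ingredient: the fractional maximum principle (Lemma \ref{lem2.9}) gives sign preservation of the modes (Lemma \ref{lem3.5}), and a comparison with the constant-coefficient problem with $\sigma=m_\sigma$, $q=n_q$, applied after splitting $f_k=f_k^++f_k^-$, yields the pointwise bounds \eqref{3.9} and \eqref{3.12}, in particular $|W_k(t)|\le|\phi_k|E_{\rho,1}(-(\lambda_k^2m_\sigma+n_q)t^\rho)$; only with these does $\sum_k\lambda_k^2|u_k(t)|$ converge for $t>0$ and $u_{xx},\,D_t^\rho u\in C((0,T]\times[0,l])$ follow. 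A smaller but related point: uniqueness of the fixed point in $X$ only excludes competitors whose coefficient sequences lie in $X$; to obtain uniqueness of classical solutions you still need the paper's final step of showing that the Fourier coefficients of an arbitrary classical solution satisfy the homogeneous mode problem and then invoking \eqref{3.9} and \eqref{3.12}.
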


We will seek the solution of problem \eqref{1.1}–\eqref{1.3} in the form:
\begin{equation}\label{3.1}
    u(x,t)=\sum\limits_{k=1}^{\infty }{{{u}_{k}}(t)\sin ({{\lambda }_{k}}x)},
\end{equation}
where ${{\lambda }_{k}}=\frac{\pi k}{l}$.

Taking equality \eqref{3.1} into account, we obtain the following problem:
\begin{equation}\label{3.2}
\left\{
\begin{aligned}
    &D_{t}^{\rho } u_{k}(t) + \lambda_k^2 \sigma(t) u_{k}(t)+q(t)u_{k}(t) = f_k(t), \qquad  0<t\le T; \\
    & u_{k}(0) = \phi_{k},
\end{aligned}
\right.
\end{equation}
where
$f_k(t)=\sqrt{\frac{2}{l}}\int\limits_{0}^{l}{f(x,t)\sin ({{\lambda }_{k}}x)dx}$ and $\phi_k=\sqrt{\frac{2}{l}}\int\limits_{0}^{l}{\phi(x)\sin ({{\lambda }_{k}}x)dx}$.

\begin{theorem} \label{thm3.4} Let Assumption 1 hold. Then, for each $k$, there exists a unique solution $u_k(t) \in C[0, T]$ to problem \eqref{3.2}, and this solution also satisfies $D_t^{\rho} u_k(t) \in C(0, T]$.
\end{theorem}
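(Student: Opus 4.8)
The plan is to recast the scalar fractional initial value problem \eqref{3.2} as an equivalent Volterra integral equation and then apply the Banach fixed point theorem (Theorem \ref{thm2.10}). Fix $k$ and set $b_k(t):=\lambda_k^2\sigma(t)+q(t)$. From Assumption 1, parts $(1)$–$(2)$, together with $\lambda_k^2\ge\pi^2/l^2$, the function $b_k$ is continuous and satisfies $0<\lambda_k^2 m_\sigma+n_q\le b_k(t)\le\lambda_k^2 M_\sigma+N_q$ on $[0,T]$; in particular it is bounded, and the lower bound is strictly positive precisely because of the constraint $n_q>-m_\sigma\pi^2/l^2$. I would choose the positive constant $a_k:=\lambda_k^2 m_\sigma+n_q>0$ and rewrite the equation in \eqref{3.2} as $D_t^{\rho}u_k+a_k u_k=f_k+(a_k-b_k)u_k$. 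Invoking the variation-of-constants representation for the model equation $D_t^{\rho}y+a_k y=g$, $y(0)=\phi_k$, this is equivalent to
\begin{equation*}
u_k(t)=\phi_k E_{\rho,1}(-a_k t^\rho)+\int_0^t (t-\eta)^{\rho-1}E_{\rho,\rho}\bigl(-a_k(t-\eta)^\rho\bigr)\bigl[f_k(\eta)+(a_k-b_k(\eta))u_k(\eta)\bigr]\,d\eta,
\end{equation*}
where the first term already enforces $u_k(0)=\phi_k$ since $E_{\rho,1}(0)=1$. Because the kernel $(t-\eta)^{\rho-1}$ is integrable for $\rho>0$ and the Mittag–Leffler functions are continuous, the right-hand side defines a continuous function of $t$, so the operator built from it maps $C[0,T]$ into itself.

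The first genuine point requiring care is verifying that this representation actually solves \eqref{3.2}. Applying $D_t^{\rho}$ to the Mittag–Leffler term uses its eigenfunction property $D_t^{\rho}E_{\rho,1}(-a_k t^\rho)=-a_k E_{\rho,1}(-a_k t^\rho)$, which follows from the first-derivative formula of Lemma \ref{lem2.3} (with $m=1$) fed into the Caputo integral and from the integral identity of Proposition \ref{prop2.4}. Applying $D_t^{\rho}$ to the convolution term is the more delicate fractional fundamental-solution computation, which returns $g(t):=f_k(t)+(a_k-b_k(t))u_k(t)$ minus $a_k$ times the convolution. Since $g$ is continuous on $[0,T]$, the resulting Caputo derivative is continuous on $(0,T]$, which is exactly the regularity $D_t^{\rho}u_k\in C(0,T]$ asserted in the theorem; it will be available as soon as existence of $u_k\in C[0,T]$ is secured.

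Define $A:C[0,T]\to C[0,T]$ by letting $(Au)(t)$ be the right-hand side of the displayed integral equation. For $u,v\in C[0,T]$ the $\phi_k$-term cancels, and bounding $E_{\rho,\rho}(-a_k s^\rho)\le 1/\Gamma(\rho)$ by Lemma \ref{lem2.5} (applicable since $\rho\in(0,1)$, $a_k>0$, and $\beta=\rho\ge\rho$) together with the estimate $|a_k-b_k(\eta)|\le\lambda_k^2(M_\sigma-m_\sigma)+(N_q-n_q)=:K_k$ gives
\begin{equation*}
|(Au)(t)-(Av)(t)|\le\frac{K_k}{\Gamma(\rho)}\int_0^t (t-\eta)^{\rho-1}|u(\eta)-v(\eta)|\,d\eta\le\frac{K_k\,t^\rho}{\Gamma(\rho+1)}\,\|u-v\|_{C[0,T]}.
\end{equation*}
Hence $\|Au-Av\|_{C[0,T]}\le\bigl(K_k T^\rho/\Gamma(\rho+1)\bigr)\|u-v\|_{C[0,T]}$.

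The remaining obstacle is that the contraction constant $K_k T^\rho/\Gamma(\rho+1)$ need not be smaller than $1$ for the given $T$ (and $K_k$ grows like $\lambda_k^2$). I would resolve this in the standard Volterra fashion: iterating the convolution estimate above and using the fractional-integral (Beta-function) semigroup bound yields $|(A^n u)(t)-(A^n v)(t)|\le\bigl((K_k T^\rho)^n/\Gamma(n\rho+1)\bigr)\|u-v\|_{C[0,T]}$, and since $(K_k T^\rho)^n/\Gamma(n\rho+1)\to0$ as $n\to\infty$, some iterate $A^n$ is a strict contraction; by the standard extension of Theorem \ref{thm2.10} (a map whose iterate is a contraction has a unique fixed point), $A$ has a unique fixed point $u_k\in C[0,T]$. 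Equivalently, one may apply Theorem \ref{thm2.10} directly on a short interval $[0,T_0]$ with $K_k T_0^\rho/\Gamma(\rho+1)<1$ and continue the solution step by step across $[0,T]$. In either case the fixed point is the unique continuous solution of the integral equation, hence, by the equivalence established above, the unique solution of \eqref{3.2} with $D_t^{\rho}u_k\in C(0,T]$, which completes the proof.
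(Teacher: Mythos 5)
Your proposal is correct, but it reaches the contraction in a genuinely different way from the paper. The paper shifts by the constant $\lambda_k^2 M_\sigma$ (the \emph{maximum} of $\lambda_k^2\sigma(t)$, with no contribution from $q$), so that the perturbation coefficient satisfies $|\lambda_k^2(M_\sigma-\sigma(s))-q(s)|\le C_k\,\lambda_k^2 M_\sigma$ with $C_k<1$ --- this is exactly where the two-sided constraint $-m_\sigma\pi^2/l^2<n_q\le q\le N_q<(M_\sigma-m_\sigma)\pi^2/l^2$ of Assumption 1(2) is used --- and then invokes the exact identity of Proposition \ref{prop2.4}, $\int_0^t\lambda_k^2M_\sigma(t-s)^{\rho-1}E_{\rho,\rho}(-\lambda_k^2M_\sigma(t-s)^\rho)\,ds=1-E_{\rho,1}(-\lambda_k^2M_\sigma t^\rho)<1$, to obtain a one-shot contraction on all of $[0,T]$ with constant $C_k$, uniformly in $T$ and $k$; Theorem \ref{thm2.10} then applies literally. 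You instead shift by the lower bound $a_k=\lambda_k^2m_\sigma+n_q$, use the crude estimate $E_{\rho,\rho}\le 1/\Gamma(\rho)$, accept a Lipschitz constant $K_kT^\rho/\Gamma(\rho+1)$ that can exceed $1$, and repair this with the Weissinger-type iterate bound $(K_kT^\rho)^n/\Gamma(n\rho+1)\to 0$ (or a stepwise continuation). Your route is more robust --- it needs only boundedness and continuity of $\sigma$ and $q$, not the calibrated inequalities of Assumption 1(2), and it makes the equivalence with the integral equation and the regularity $D_t^\rho u_k\in C(0,T]$ more explicit --- at the cost of invoking the ``some iterate is a contraction'' extension rather than Theorem \ref{thm2.10} as stated. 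The paper's route is shorter and stays within the stated fixed point theorem, but it is precisely what forces the otherwise odd-looking hypothesis on $q$. Both arguments are sound and yield the same conclusion.
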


\begin{proof} 
Let us rewrite the Cauchy-type problem \eqref{3.2} in the form:
\begin{equation}\label{3.3}
\left\{
\begin{aligned}
    &D_{t}^{\rho }{{u}_{k}}(t)+\lambda _{k}^{2}{{M}_{\sigma }}{{u}_{k}}(t)=f_k(t)+\lambda _{k}^{2}({{M}_{\sigma }}-\sigma (t)){{u}_{k}}(t)-q(t){{u}_{k}}(t), \qquad 0<t\le T; \\
    & u_{k}(0) = \phi_{k}.
\end{aligned}
\right.
\end{equation}

The problem can be expressed in the form of the integral equation (see \cite{KilbasSrivstavaTrujillo}, p. 230):
$$
u_{k}(t) = \phi_{k} E_{\rho,1}\left(-\lambda^2_{k} M_{\sigma} t^{\rho} \right) 
$$
\begin{equation}\label{3.4}
+\int_{0}^{t}{\left[ f_k(s)+[\lambda _{k}^{2}({{M}_{\sigma }}-\sigma (s))-q(s)]{{u}_{k}}(s) \right]}{{(t-s)}^{\rho -1}}{{E}_{\rho ,\rho }}\left( -\lambda _{k}^{2}{{M}_{\sigma }}{{(t-s)}^{\rho }} \right)ds.
\end{equation}

Let us show that this integral equation has a unique solution in the space $C[0,T]$. To this end, we apply the Banach fixed point Theorem \ref{thm2.10}. Rewrite the integral equation \eqref{3.4} as follows:
$$
u_k(t) = A u_k(t),
$$
where  $Au_k(t)$ is defined as:
$$
Au_{k}(t) = \phi_{k} E_{\rho,1}\left(-\lambda^2_{k} M_{\sigma} t^{\rho} \right) 
$$
\begin{equation}\label{3.5}
+\int_{0}^{t}{\left[ f_k(s)+[\lambda _{k}^{2}({{M}_{\sigma }}-\sigma (s))-q(s)]{{u}_{k}}(s) \right]}{{(t-s)}^{\rho -1}}{{E}_{\rho ,\rho }}\left( -\lambda _{k}^{2}{{M}_{\sigma }}{{(t-s)}^{\rho }} \right)ds.
\end{equation}

To apply Theorem \ref{thm2.10}, we need to prove the following:

\qquad $(a)$ If $u_k (t) \in C[0,T]$, then $Au_{k}(t) \in C[0,T]$;

\qquad $(b)$ For any $v_k , w_k \in C[0,T]$, the following estimate holds:
\begin{equation}\label{3.6}
   \|Av_k - Aw_k\|_{C[0,T]} \leq \beta \|v_k - w_k\|_{C[0,T]}, \quad \beta \in (0,1). 
\end{equation}

First, note that if $u_k(t) \in C[0,T]$ then using conditions (1)–(3) of Assumption 1 and the properties of the Mittag–Leffler function, we conclude that $Au_k(t) \in C[0,T]$.

Second, let us prove the estimate \eqref{3.6}. Taking into account conditions (1)-(2) of Assumption 1 and using the operator defined in \eqref{3.5}, we obtain the following:
$$
\left| A{{v}_{k}}(t)-A{{w}_{k}}(t) \right|
$$
$$
\le {{\left\| {{v}_{k}}(t)-{{w}_{k}}(t) \right\|}_{C[0,T]}}\int_{0}^{t}{\left| \lambda _{k}^{2}({{M}_{\sigma }}-\sigma (s))-q(s) \right|{{(t-s)}^{\rho -1}}{{E}_{\rho ,\rho }}\left( -\lambda _{k}^{2}{{M}_{\sigma }}{{(t-s)}^{\rho }} \right)ds}
$$
$$
\le {{C}_{k}}{{\left\| {{v}_{k}}(t)-{{w}_{k}}(t) \right\|}_{C[0,T]}}\int_{0}^{t}{\lambda _{k}^{2}{{M}_{\sigma }}{{(t-s)}^{\rho -1}}{{E}_{\rho ,\rho }}\left( -\lambda _{k}^{2}{{M}_{\sigma }}{{(t-s)}^{\rho }} \right)ds},
$$
where ${{C}_{k}}=\underset{t\in [0,T]}{\mathop{\max }}\,\left| \frac{{{M}_{\sigma }}-{{m}_{\sigma }}}{{{M}_{\sigma }}}-\frac{q(t)}{\lambda _{k}^{2}{{M}_{\sigma }}} \right|$ and it is easy to see that $C_k \in (0,1)$ holds for all $\lambda_k$.

Based on Proposition \ref{prop2.4}, the following inequality can be established:
$$
\left| A{{v}_{k}}(t)-A{{w}_{k}}(t) \right|\le {{C}_{k}}{{\left\| {{v}_{k}}(t)-{{w}_{k}}(t) \right\|}_{C[0,T]}}.
$$

Consequently, there exists a unique solution $u_k \in C[0, T]$ of the problem \eqref{3.2}.

According to problem \eqref{3.2}, the function $u_k(t)$ satisfies the following equation:
$$
D_t^\rho u_k(t;\sigma; q) = f_k(t) - \lambda_k^2 \sigma(t) u_k(t) - q(t)u_k(t).
$$

Given conditions (1)–(3) of Assumption 1 and $u_k(t) \in C[0,T]$, we obtain that $D_t^\rho u_k(t) \in C[0,T]$.

\end{proof}

Let us move on to solving problem \eqref{1.1}–\eqref{1.3}. For this purpose, an estimate for $u_k(t)$ is presented first.

\begin{lemma} \label{lem3.5} Suppose that $u_k(t)$ is a unique solution of the problem \eqref{3.2}. Then following statements hold for each $k$:

\qquad $(a)$ If $f_k(t)\le0$ on $[0,T]$ and $\phi_k \le 0$, then we have $u_k(t) \le 0$ on $[0,T]$;

\qquad $(b)$ If $f_k(t)\ge0$ on $[0,T]$ and $\phi_k \ge 0$, then we have $u_k(t) \ge 0$ on $[0,T]$.
\end{lemma}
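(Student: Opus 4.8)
The plan is to prove part (a) by contradiction using the fractional extremum principle of Lemma~\ref{lem2.9}, and then to obtain part (b) by applying (a) to $-u_k(t)$. The decisive structural fact, which I would establish first, is that the zeroth-order coefficient in \eqref{3.2} is strictly positive. Setting $c_k(t):=\lambda_k^2\sigma(t)+q(t)$, conditions $(1)$ and $(2)$ of Assumption~1 give $\sigma(t)\ge m_\sigma$ and $q(t)>-m_\sigma\pi^2/l^2=-m_\sigma\lambda_1^2$, whence $c_k(t)\ge m_\sigma\lambda_k^2+q(t)=m_\sigma(\lambda_k^2-\lambda_1^2)+\bigl(q(t)+m_\sigma\lambda_1^2\bigr)>0$ for every $k\ge 1$ and every $t\in[0,T]$, since the first summand is nonnegative and the second is strictly positive.

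To prove (a), suppose $f_k(t)\le 0$ on $[0,T]$ and $\phi_k\le 0$, and assume for contradiction that $u_k$ takes a positive value, i.e. $M:=\max_{t\in[0,T]}u_k(t)>0$. Because $u_k(0)=\phi_k\le 0<M$, the maximum cannot be attained at $t=0$, so it is attained at some $t_0\in(0,T]$. By Theorem~\ref{thm3.4} we have $u_k\in C[0,T]$ and $D_t^{\rho}u_k\in C[0,T]$, so Lemma~\ref{lem2.9}$(a)$ applies and yields $D_t^{\rho}u_k(t_0)\ge 0$. On the other hand, evaluating the equation in \eqref{3.2} at $t_0$ gives $D_t^{\rho}u_k(t_0)=f_k(t_0)-c_k(t_0)u_k(t_0)$; since $f_k(t_0)\le 0$, $c_k(t_0)>0$ and $u_k(t_0)=M>0$, the right-hand side is strictly negative. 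This contradiction forces $u_k(t)\le 0$ on all of $[0,T]$.

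For part (b), I would note that if $f_k(t)\ge 0$ and $\phi_k\ge 0$, then $\tilde u_k:=-u_k$ solves a problem of the form \eqref{3.2} with data $-f_k\le 0$ and $-\phi_k\le 0$, so part (a) gives $\tilde u_k\le 0$, that is $u_k\ge 0$; equivalently, one argues directly at a negative minimum using Lemma~\ref{lem2.9}$(b)$. The only genuine obstacle is the sign verification for $c_k(t)$: it is precisely the strict lower bound $q(t)>-m_\sigma\pi^2/l^2$ built into Assumption~1$(2)$ that keeps the coefficient positive even for the smallest index $k=1$, where $\lambda_k^2-\lambda_1^2$ vanishes. Once this positivity is secured, the remaining steps are immediate consequences of Lemma~\ref{lem2.9} together with the regularity supplied by Theorem~\ref{thm3.4}.
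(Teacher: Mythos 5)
Your proof is correct and follows essentially the same route as the paper: the extremum principle of Lemma~\ref{lem2.9} applied at an interior maximum (respectively, passing to $-u_k$ for part (b)). The one genuine improvement is that you make explicit the positivity of the coefficient $\lambda_k^2\sigma(t)+q(t)$, which the paper's own argument uses silently when it passes from $\bigl(\lambda_k^2\sigma(t_0)+q(t_0)\bigr)u_k(t_0)\le 0$ to $u_k(t_0)\le 0$.
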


\begin{proof}  Since $u_k(t) \in C[0, T]$, there exists a point $t_0 \in [0, T]$ where the following equality holds:
$$
u_k(t_0) = \max_{t \in [0, T]} u_k(t).
$$

If $t_0 = 0$, then the inequality $u_k(t) \le u_k(0) = \phi_k \le 0$ holds. On the other hand, if $t_0 \in (0, T]$, applying Lemma \ref{lem2.9}, we deduce that $D_t^{\rho} u_k(t_0) \ge 0$. Therefore
$$
\lambda _{k}^{2}\sigma (t_0){{u}_{k}}(t_0)+q(t_0){{u}_{k}}(t_0)=-D_{t}^{\rho }{{u}_{k}}(t_0)+{{f}_{k}}(t_0)\le 0,
$$
and it follows that $u_k(t)\le 0$ on the interval $[0,T]$.

For case (b), let us consider $\tilde{u}_{k}(t) = -u_{k}(t)$. Then, by repeating the above proof for $\tilde{u}_{k}(t)$, we obtain that $\tilde{u}_{k}(t) \le 0$, which implies that $u_{k}(t) \ge 0$. 

\end{proof}

Let us decompose the Cauchy problem \eqref{3.2} into the homogeneous and inhomogeneous parts:

\begin{equation}\label{3.7}
\left\{
\begin{aligned}
    &D_{t}^{\rho }{{V}_{k}}(t)+\lambda _{k}^{2}\sigma(t){{V}_{k}}(t)+q(t){{V}_{k}}(t)=f_k(t);\\
    & V_{k}(0) = 0,
\end{aligned}
\right.
\end{equation}
and
\begin{equation}\label{3.8}
\left\{
\begin{aligned}
    &D_{t}^{\rho }{{W}_{k}}(t)+\lambda _{k}^{2}\sigma(t){{W}_{k}}(t)+q(t){{W}_{k}}(t)=0;\\
    & W_{k}(0) = \phi_{k}.
\end{aligned}
\right.
\end{equation}
Then $u_k(t) = V_k(t) + W_k(t)$.

To analyze the regularity properties of the solution, we represent the function $f_k(t)$ as follows:
$$
f_k(t) = f_k^{+}(t) + f_k^{-}(t), \qquad on \,\, [0,T],
$$
where $f_k^{+}(t)=\max\{0, f_k(t)\}$ and $f_k^{-}(t)=\min\{0, f_k(t)\}$.

\begin{lemma}\label{lem3.6} Assume that conditions (1)–(3) of Assumption 1 are satisfied. Then, for each function $V_k(t)$, the following estimate holds for all $t \in [0, T]$:
\begin{equation}\label{3.9}
     \left| V_{k}(t) \right| \le \int_{0}^{t} \left| f_k(s) \right |{{(t-s)}^{\rho -1}}{{E}_{\rho ,\rho }}\left( -(\lambda _{k}^{2}{{m}_{\sigma }} +n_q){{(t-s)}^{\rho }} \right)ds.
\end{equation}
\end{lemma}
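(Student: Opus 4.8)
The plan is to exhibit the right-hand side of \eqref{3.9} as the solution of a constant-coefficient Cauchy problem that dominates $V_k$ from both sides. First I would set
$$
Y_k(t) := \int_0^t |f_k(s)|\,(t-s)^{\rho-1} E_{\rho,\rho}\bigl(-(\lambda_k^2 m_\sigma + n_q)(t-s)^\rho\bigr)\,ds,
$$
and note, via the representation formula (\cite{KilbasSrivstavaTrujillo}, p. 230) already used to obtain \eqref{3.4}, that $Y_k$ is the unique solution of the frozen-coefficient problem $D_t^\rho Y_k(t) + (\lambda_k^2 m_\sigma + n_q) Y_k(t) = |f_k(t)|$ with $Y_k(0)=0$. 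Since $\lambda_k^2 m_\sigma + n_q \ge \frac{\pi^2}{l^2} m_\sigma + n_q > 0$ by Assumption 1(2), Lemma \ref{lem2.5} (with $\beta=\rho$) gives $E_{\rho,\rho}\bigl(-(\lambda_k^2 m_\sigma + n_q)(t-s)^\rho\bigr) \ge 0$, so the integrand is nonnegative and $Y_k(t) \ge 0$ on $[0,T]$. The claimed estimate \eqref{3.9} is then exactly $|V_k(t)| \le Y_k(t)$.

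Next I would prove the two-sided bound $-Y_k \le V_k \le Y_k$ by a comparison argument resting on Lemma \ref{lem3.5}. Introduce the auxiliary functions $z_{\pm}(t) := Y_k(t) \mp V_k(t)$, so that $z_\pm(0)=0$. Subtracting the equation for $V_k$ in \eqref{3.7} from that for $Y_k$ and rewriting the frozen coefficient as $-(\lambda_k^2 m_\sigma + n_q)Y_k = -(\lambda_k^2\sigma(t)+q(t))Y_k + [\lambda_k^2(\sigma(t)-m_\sigma)+(q(t)-n_q)]Y_k$, a direct computation shows that each $z_\pm$ solves a Cauchy problem of the same form as \eqref{3.2}, namely
$$
D_t^\rho z_\pm(t) + \lambda_k^2\sigma(t) z_\pm(t) + q(t) z_\pm(t) = \bigl(|f_k(t)| \mp f_k(t)\bigr) + [\lambda_k^2(\sigma(t)-m_\sigma)+(q(t)-n_q)]Y_k(t),
$$
with $z_\pm(0)=0$. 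Here $|f_k| - f_k = -2 f_k^- \ge 0$ and $|f_k| + f_k = 2 f_k^+ \ge 0$ by the decomposition $f_k = f_k^+ + f_k^-$, while $\lambda_k^2(\sigma - m_\sigma) + (q - n_q) \ge 0$ by Assumption 1(1)--(2) and $Y_k \ge 0$ from the first step; hence both right-hand sides are nonnegative and continuous.

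Finally, since $z_\pm$ solves a problem of the form \eqref{3.2} with a continuous nonnegative source and nonnegative initial datum $z_\pm(0)=0$, Lemma \ref{lem3.5}(b) yields $z_\pm(t) \ge 0$ on $[0,T]$; that is, $-Y_k(t) \le V_k(t) \le Y_k(t)$, which is precisely \eqref{3.9}. The main obstacle is the middle step: one must reorganize the difference of the two fractional equations so that the common operator $D_t^\rho + \lambda_k^2\sigma(t) + q(t)$ appears on the left and the mismatch between the frozen coefficient $\lambda_k^2 m_\sigma + n_q$ and the true coefficient $\lambda_k^2\sigma(t) + q(t)$ is absorbed into a manifestly nonnegative source term — this is exactly where the lower bounds $\sigma(t) \ge m_\sigma$ and $q(t) \ge n_q$ furnished by Assumption 1 are indispensable. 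A minor point to verify is that $z_\pm$ and $D_t^\rho z_\pm$ lie in $C[0,T]$, as required to invoke Lemmas \ref{lem2.9} and \ref{lem3.5}; this follows from Theorem \ref{thm3.4} applied to $Y_k$ and to $V_k$ together with the continuity of $f_k^\pm$.
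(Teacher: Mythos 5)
Your argument is correct, and it rests on the same two pillars as the paper's proof --- the positivity result of Lemma \ref{lem3.5} and comparison with a frozen-coefficient problem --- but it executes the comparison differently. The paper takes the frozen problem with source $f_k$ (not $|f_k|$), derives the integral identity \eqref{3.10} for the difference $\tilde V_k - V_k$ via the representation formula, applies Lemma \ref{lem3.5} to $V_k^{f^{\pm}}$ themselves to fix their signs, and then reads off the ordering $0 \le V_k^{f^+} \le \tilde V_k^{f^+}$ and $\tilde V_k^{f^-} \le V_k^{f^-} \le 0$ from the sign of the integrand in \eqref{3.10}. You instead take the frozen problem with source $|f_k|$, form $z_{\pm} = Y_k \mp V_k$, absorb the coefficient mismatch into the manifestly nonnegative source $\bigl(|f_k| \mp f_k\bigr) + [\lambda_k^2(\sigma - m_\sigma) + (q - n_q)]Y_k$, and apply Lemma \ref{lem3.5} to $z_{\pm}$ directly. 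Your route avoids the intermediate integral equation \eqref{3.10} and the splitting of $V_k$ into $V_k^{f^+} + V_k^{f^-}$, at the modest cost of needing $Y_k \ge 0$ (immediate from Lemma \ref{lem2.5} with $\beta = \rho$ and $\lambda_k^2 m_\sigma + n_q > 0$). One small point worth making explicit: Lemma \ref{lem3.5} is stated for the specific problem \eqref{3.2}, so to apply it to $z_{\pm}$ you should note that its proof (via the extremum principle, Lemma \ref{lem2.9}) goes through verbatim for an arbitrary continuous nonnegative source and nonnegative initial datum --- the same tacit extension the paper itself relies on when applying the lemma to $V_k^{f^{\pm}}$. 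With that caveat recorded, your proof is complete and arguably cleaner than the original.
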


\begin{proof}   By substituting $\sigma(t) = m_{\sigma}$ and $q(t) = n_q$ into equation \eqref{3.7}, we consider the problem:
$$
   \left\{
\begin{aligned}
    &D_{t}^{\rho }{\tilde{V}_{k}}(t)+\lambda _{k}^{2}m_{\sigma}{\tilde{V}_{k}}(t)+n_q{\tilde{V}_{k}}(t)=f_k(t);\\
    & \tilde{V}_{k}(0)= 0.
\end{aligned}
\right. 
$$

For the difference between the functions $\tilde{V}_{k}(t)$ and $V_{k}(t)$, we obtain the integral equation:
$$
\tilde{V}_{k}(t)-V_{k}(t)
$$
\begin{equation}\label{3.10}
    = \int_{0}^{t}{\left[ \lambda _{k}^{2}(\sigma(s)-m_{\sigma})+q(s)-n_q \right]}V_{k}(s){{(t-s)}^{\rho -1}}{{E}_{\rho ,\rho }}\left( -(\lambda _{k}^{2}{{m}_{\sigma }}+n_q){{(t-s)}^{\rho }} \right)ds. 
\end{equation}

Using \eqref{3.10} and Lemma \ref{lem3.5}, we obtain the estimate:
\begin{equation}\label{3.11}
\left\{
\begin{aligned}
    &0 \le V^{f^{+}}_{k}(t) \le \tilde{V}^{f^{+}}_{k}(t), \qquad on \,\, [0,T]; \\
    & \tilde{V}^{f^{-}}_{k}(t) \le V^{f^{-}}_{k}(t) \le 0, \qquad on \,\, [0,T],
\end{aligned}
\right.
\end{equation}
where the functions $V^{f^{+}}_{k}(t)$, $\tilde{V}^{f^{+}}_{k}(t)$ and $V^{f^{-}}_{k}(t)$, $\tilde{V}^{f^{-}}_{k}(t)$ denote, respectively, the solutions of the equations associated with the functions $f_k^{+}(t)$ and $f_k^{-}(t)$.

The solutions $\tilde{V}^{f^{+}}_{k}(t)$ and $\tilde{V}^{f^{-}}_{k}(t)$ are given, respectively, by the following expressions (see \cite{KilbasSrivstavaTrujillo}, p. 230):
$$
   \tilde{V}^{f^{+}}_{k}(t) = \int_{0}^{t}f^{+}_k(s){{(t-s)}^{\rho -1}}{{E}_{\rho ,\rho }}\left( -(\lambda _{k}^{2}{{m}_{\sigma }} + n_q){{(t-s)}^{\rho }} \right)ds, \qquad \forall t \in [0, T],
$$
and
$$
   \tilde{V}^{f^{-}}_{k}(t) = \int_{0}^{t}f^{-}_k(s){{(t-s)}^{\rho -1}}{{E}_{\rho ,\rho }}\left( -(\lambda _{k}^{2}{{m}_{\sigma }}+n_q){{(t-s)}^{\rho }} \right)ds, \qquad \forall t \in [0, T].
$$

Using $V^{f}_{k}(t)=V^{f^{+}}_{k}(t)+V^{f^{-}}_{k}(t)$ and \eqref{3.11}, we obtain:
$$
\left|V_{k}(t)\right| \le \int_{0}^{t} \left| f_k(s) \right |{{(t-s)}^{\rho -1}}{{E}_{\rho ,\rho }}\left( -(\lambda _{k}^{2}{{m}_{\sigma }} +n_q){{(t-s)}^{\rho }} \right)ds, \qquad \forall t \in [0, T].
$$

\end{proof}

\begin{lemma}\label{lem3.7}Assume that conditions (1)–(2) and (4) of Assumption 1 are satisfied. Then, for each function $W_k(t)$, the following estimate is valid for all $t \in [0,T]$:
\begin{equation}\label{3.12}
    \left| W_{k}(t) \right| \le \left| \phi_k \right| E_{\rho,1}\left(-(\lambda^2_{k} m_{\sigma}+ n_q) t^{\rho} \right).
\end{equation}
\end{lemma}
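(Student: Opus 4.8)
The plan is to adapt the comparison strategy used in the proof of Lemma \ref{lem3.6}, but now transferring the sign information from the initial datum $\phi_k$ rather than from a forcing term. First I would introduce the constant–coefficient companion problem obtained from \eqref{3.8} by freezing $\sigma(t)\equiv m_\sigma$ and $q(t)\equiv n_q$,
$$
D_t^\rho \tilde W_k(t)+(\lambda_k^2 m_\sigma+n_q)\tilde W_k(t)=0,\qquad \tilde W_k(0)=\phi_k,
$$
whose solution is explicitly $\tilde W_k(t)=\phi_k E_{\rho,1}\bigl(-(\lambda_k^2 m_\sigma+n_q)t^\rho\bigr)$. A preliminary but essential observation is that condition $(2)$ of Assumption 1 together with $\lambda_k^2\ge \pi^2/l^2$ forces $\lambda_k^2 m_\sigma+n_q>0$ for every $k$; hence Lemma \ref{lem2.5} (with $\beta=1\ge\rho$ and with $\beta=\rho$) guarantees that both $E_{\rho,1}$ and the kernel factor $E_{\rho,\rho}$, evaluated at this negative argument, are nonnegative. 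This positivity is what makes the comparison sign-coherent.

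Next I would write the Duhamel identity for the difference $\tilde W_k-W_k$. Rewriting \eqref{3.8} as $D_t^\rho W_k+(\lambda_k^2 m_\sigma+n_q)W_k=-[\lambda_k^2(\sigma(t)-m_\sigma)+(q(t)-n_q)]W_k$ and subtracting, the vanishing initial mismatch yields
$$
\tilde W_k(t)-W_k(t)=\int_0^t\bigl[\lambda_k^2(\sigma(s)-m_\sigma)+q(s)-n_q\bigr]W_k(s)(t-s)^{\rho-1}E_{\rho,\rho}\bigl(-(\lambda_k^2 m_\sigma+n_q)(t-s)^\rho\bigr)\,ds,
$$
which is the exact analogue of \eqref{3.10}. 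The bracketed coefficient is nonnegative by conditions $(1)$–$(2)$ of Assumption 1, and the kernel is nonnegative by the remark above, so the sign of the whole integral is governed solely by the sign of $W_k$.

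Then I would split into the two sign cases of the (constant) initial value $\phi_k$, the same device used in Lemma \ref{lem3.5}. If $\phi_k\ge0$, Lemma \ref{lem3.5} applied to \eqref{3.8} gives $W_k(t)\ge0$, whence the integral is $\ge 0$ and $W_k\le\tilde W_k=|\phi_k|\,E_{\rho,1}\bigl(-(\lambda_k^2 m_\sigma+n_q)t^\rho\bigr)$; combined with $0\le W_k$ this yields \eqref{3.12}. If $\phi_k\le0$, Lemma \ref{lem3.5} gives $W_k(t)\le0$, the integral is $\le0$, so $W_k\ge\tilde W_k=-|\phi_k|\,E_{\rho,1}\bigl(-(\lambda_k^2 m_\sigma+n_q)t^\rho\bigr)$, and again $|W_k(t)|\le|\phi_k|\,E_{\rho,1}\bigl(-(\lambda_k^2 m_\sigma+n_q)t^\rho\bigr)$.

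I expect the only genuinely delicate point to be verifying the strict positivity $\lambda_k^2 m_\sigma+n_q>0$ for all $k$, so that the Mittag–Leffler factors are the nonnegative, monotone ones to which Lemma \ref{lem2.5} applies; once that is secured, the nonnegativity of $\lambda_k^2(\sigma-m_\sigma)+(q-n_q)$ and the propagation of the sign of $W_k$ through Lemma \ref{lem3.5} are routine. A minor bookkeeping subtlety is that Lemma \ref{lem3.5} must also be invoked for the frozen-coefficient difference problem, which is legitimate because the constants $m_\sigma$ and $n_q$ themselves satisfy conditions $(1)$–$(2)$ of Assumption 1.
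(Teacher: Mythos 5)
Your argument is correct and is precisely the one the paper intends: the paper gives no proof of Lemma \ref{lem3.7} beyond the remark that it ``is proved similarly to Lemma \ref{lem3.6}'', and your proposal carries out exactly that comparison argument (freeze the coefficients at $m_\sigma$, $n_q$, write the Duhamel identity for $\tilde W_k-W_k$, and use the sign-preservation Lemma \ref{lem3.5} together with the positivity of the Mittag--Leffler kernels), with the natural simplification that the $f^{\pm}$ splitting is replaced by a case split on the sign of the single constant $\phi_k$. Your explicit verification that $\lambda_k^2 m_\sigma+n_q>0$ is a welcome detail the paper leaves implicit.
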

Lemma \ref{lem3.7} is proved similarly to Lemma \ref{lem3.6}.

By applying Lemmas \ref{lem3.6} and \ref{lem3.7}, the following estimate holds for the function $u_k(t)$:
$$
\left | u_k(t) \right | \le \left| \phi_k \right| E_{\rho,1}\left(-(\lambda^2_{k} m_{\sigma}+ n_q) t^{\rho} \right) 
$$
$$
+  \int_{0}^{t} \left| f_k(s) \right |{{(t-s)}^{\rho -1}}{{E}_{\rho ,\rho }}\left( -(\lambda _{k}^{2}{{m}_{\sigma }} +n_q){{(t-s)}^{\rho }} \right)ds.
$$

According to Bessel’s inequality, the following statement holds.

\begin{lemma}\label{lem3.8} If conditions (3) and (4) of Assumption 2 are satisfied, then the estimates hold true:
\begin{equation} \label{3.13}
\sum\limits_{k=1}^{\infty }{\lambda _{k}^{2}\left| {{f}_{k}}(t) \right|}\le \frac{l}{\sqrt{6}}{{\left\| f_{xxx}^{(3)}(x,t) \right\|}_{C([0,T];{{L}_{2}}(0,l))}}
\end{equation}
and
\begin{equation}\label{3.14}
   \sum\limits_{k=1}^{\infty }{\left| {{\phi }_{k}} \right|}\le \frac{l}{\sqrt{6}}{{\left\| \phi'(x) \right\|}_{{{L}_{2}}(0,l)}}.
\end{equation}
\end{lemma}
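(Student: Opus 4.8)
The plan is to derive both estimates from three ingredients: repeated integration by parts in the definitions of $f_k(t)$ and $\phi_k$, the Cauchy--Schwarz inequality, and Bessel's inequality for the orthonormal cosine system $\{\sqrt{2/l}\,\cos(\lambda_k x)\}_{k\ge1}$ on $(0,l)$. The numerical constant $l/\sqrt{6}$ will come out of the explicit series $\sum_{k=1}^\infty \lambda_k^{-2} = \frac{l^2}{\pi^2}\sum_{k=1}^\infty k^{-2} = \frac{l^2}{\pi^2}\cdot\frac{\pi^2}{6} = \frac{l^2}{6}$.

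First I would establish \eqref{3.14}. Integrating by parts once in $\phi_k = \sqrt{2/l}\int_0^l \phi(x)\sin(\lambda_k x)\,dx$ and invoking $\phi(0)=\phi(l)=0$ from \eqref{1.3}, the boundary terms vanish and one obtains $\phi_k = \lambda_k^{-1}\tilde{\phi}_k$, where $\tilde{\phi}_k = \sqrt{2/l}\int_0^l \phi'(x)\cos(\lambda_k x)\,dx$ is the $k$-th cosine coefficient of $\phi'$. Hence $\sum_{k=1}^\infty |\phi_k| = \sum_{k=1}^\infty \lambda_k^{-1}|\tilde{\phi}_k|$, and Cauchy--Schwarz bounds this by $\bigl(\sum_{k=1}^\infty \lambda_k^{-2}\bigr)^{1/2}\bigl(\sum_{k=1}^\infty |\tilde{\phi}_k|^2\bigr)^{1/2}$. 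The first factor equals $l/\sqrt{6}$ by the computation above, while Bessel's inequality for the cosine system gives $\sum_{k=1}^\infty |\tilde{\phi}_k|^2 \le \|\phi'\|_{L_2(0,l)}^2$, which together yield \eqref{3.14}. (Here condition (4) of Assumption~1, $\phi\in W_2^1(0,l)$, guarantees $\phi'\in L_2(0,l)$ so the right-hand side is finite.)

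The estimate \eqref{3.13} follows by the same mechanism with three integrations by parts. Integrating by parts three times in $f_k(t) = \sqrt{2/l}\int_0^l f(x,t)\sin(\lambda_k x)\,dx$ produces six boundary contributions, evaluated at $x=0$ and $x=l$, involving $f$, $f_x$, and $f_{xx}$. The first and third integrations attach $\cos(\lambda_k x)$ factors whose boundary terms are killed by $f(0,t)=f(l,t)=0$ and by $f_{xx}(0,t)=f_{xx}(l,t)=0$; the middle integration attaches $\sin(\lambda_k x)$, which vanishes automatically at both endpoints. What remains is $f_k(t) = -\lambda_k^{-3} f_k^{(3)}(t)$, where $f_k^{(3)}(t) = \sqrt{2/l}\int_0^l f_{xxx}(x,t)\cos(\lambda_k x)\,dx$ is the cosine coefficient of $f_{xxx}$. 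Thus $\lambda_k^2|f_k(t)| = \lambda_k^{-1}|f_k^{(3)}(t)|$, and exactly as before Cauchy--Schwarz followed by Bessel gives $\sum_{k=1}^\infty \lambda_k^2|f_k(t)| \le \frac{l}{\sqrt{6}}\,\|f_{xxx}(\cdot,t)\|_{L_2(0,l)}$, which is dominated by the $C([0,T];L_2(0,l))$-norm uniformly in $t$, yielding \eqref{3.13}.

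There is no substantial analytic obstacle here; the one point requiring care—and the reason the lemma's hypotheses are stated as they are—is the vanishing of all boundary terms in the triple integration by parts. This is precisely why condition (3) of Assumption~1 imposes not only $f(0,t)=f(l,t)=0$ but also $f_{xx}(0,t)=f_{xx}(l,t)=0$: without the second pair, the third integration by parts would leave surviving endpoint contributions and the clean identity $f_k = -\lambda_k^{-3}f_k^{(3)}$ would fail. I would therefore flag the verification of these compatibility conditions, rather than the inequalities themselves, as the crux of the argument.
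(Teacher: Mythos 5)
Your argument is correct and is exactly the one the paper intends: the paper offers no written proof beyond the remark ``according to Bessel's inequality,'' and your integration by parts (once for $\phi_k$, three times for $f_k$, with the boundary terms killed by the compatibility conditions), followed by Cauchy--Schwarz against $\sum_k \lambda_k^{-2}=l^2/6$ and Bessel's inequality for the cosine coefficients, is the standard way to produce the constant $l/\sqrt{6}$. Note only that the lemma's reference to ``conditions (3) and (4) of Assumption 2'' is a typo for Assumption 1, which you have implicitly corrected by using the right hypotheses.
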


Next, for the function $u_{xx}(x,t)$, we have:
$$
\left| {{u}_{xx}}(x,t) \right|\le \sum\limits_{k=1}^{\infty }{\lambda _{k}^{2}\left| {{u}_{k}}(t) \right|}\le \sum\limits_{k=1}^{\infty }{\lambda _{k}^{2}\left| {{V}_{k}}(t) \right|}+\sum\limits_{k=1}^{\infty }{\lambda _{k}^{2}\left| {{W}_{k}}(t) \right|}={{Q}_{1}}+{{Q}_{2}},
$$
where ${{Q}_{1}}=\sum\limits_{k=1}^{\infty }{\lambda _{k}^{2}\left| {{V}_{k}}(t) \right|}$ and ${{Q}_{2}}=\sum\limits_{k=1}^{\infty }{\lambda _{k}^{2}\left| {{W}_{k}}(t) \right|}$.

Let us estimate each sum separately. The sum $Q_1$ is estimated using \eqref{3.9}:
$$
{{Q}_{1}}\le \sum\limits_{k=1}^{\infty }{\lambda _{k}^{2}\int_{0}^{t}{\left| {{f}_{k}}(s) \right|}{{(t-s)}^{\rho -1}}{{E}_{\rho ,\rho }}\left( -({{\lambda }^2_{k}}{{m}_{\sigma }}+{{n}_{q}}){{(t-s)}^{\rho }} \right)ds}.
$$

By employing \eqref{3.13} and Lemma \ref{lem2.5}, we obtain:
\begin{equation}\label{3.15}
{{Q}_{1}}\le \frac{l{{T}^{\rho }}}{\sqrt{6}\Gamma (\rho +1)}{{\left\| f_{xxx}^{(3)}(x,t) \right\|}_{C([0,T];{{L}_{2}}(0,l))}}. 
\end{equation}

To estimate the sum $Q_2$, we use \eqref{3.12},\eqref{3.14} and Lemma \ref{lem2.2}:
\begin{equation}\label{3.16}
{{Q}_{2}}=\sum\limits_{k=1}^{\infty }{\lambda _{k}^{2}\left| {{\phi }_{k}} \right|{{E}_{\rho ,1}}\left( -(\lambda _{k}^{2}{{m}_{\sigma }}+{{n}_{q}}){{t}^{\rho }} \right)}
\le \frac{Cl}{\sqrt{6}}t^{-\rho}{{\left\| \phi'(x) \right\|}_{{{L}_{2}}(0,l)}}, \qquad t>0.   
\end{equation}

Using \eqref{3.15} and \eqref{3.16}, the following estimate is obtained:
$$
\left| {{u}_{xx}}(x,t) \right|\le \frac{l{{T}^{\rho }}}{\sqrt{6}\Gamma (\rho +1)}{{\left\| f_{xxx}^{(3)}(x,t) \right\|}_{C([0,T];{{L}_{2}}(0,l))}}+\frac{Cl}{\sqrt{6}}{{t}^{-\rho }}{{\left\| {\phi }'(x) \right\|}_{{{L}_{2}}(0,l)}}, \qquad t>0.
$$

Hence, it follows that $u_{xx}(x,t) \in C((0,T]\times [0,l])$. Using the same reasoning as above, $u(x,t) \in C([0,T]\times [0,l])$ is established.

We estimate the function $D_t^{\rho} u(x,t)$ from equation \eqref{1.1} using the estimates of the functions $u(x,t)$ and $u_{xx}(x,t)$ as follows:
$$
{{\left| D_{t}^{\rho }u(x,t) \right|}}\le \frac{l}{\sqrt{6}}\left( \frac{({{M}_{\sigma }}+{{N}_{q}}){{T}^{\rho }}}{\Gamma (\rho +1)}+1 \right){{\left\| f_{xxx}^{(3)}(x,t) \right\|}_{C([0,T];{{L}_{2}}(0,l))}}
$$
$$
+\frac{Cl({{M}_{\sigma }}+{{N}_{q}})}{\sqrt{6}}{{t}^{-\rho }}{{\left\| \phi' (x) \right\|}_{{{L}_{2}}(0,l)}}, \qquad\qquad t>0.
$$

Hence, it follows that $D_{t}^{\rho }u(x,t) \in C((0,T]\times [0,l])$.

Let us investigate the uniqueness of the solution to problem \eqref{1.1}-\eqref{1.3}. Suppose we have two solutions: $\tilde{u}(x,t)$, $\bar{u}(x,t)$ and set $u(x,t) = \tilde{u}(x,t) - \bar{u}(x,t)$. Then, we have 
\begin{equation} \label{3.17}
  \left\{
\begin{aligned}
    &D_{t}^{\rho}u(x,t)-\sigma(t)u_{xx}(x,t)+q(t)u(x,t)=0, \qquad    (x,t) \in (0,l) \times (0,T]; \\
    &  u(x,0)=0,\qquad x \in [0,l].
\end{aligned}
\right.  
\end{equation}

Let $u_k(t) = \sqrt{\frac{2}{l}} (u(x,t), sin(\lambda_kx))$. Then, according to Definition \ref{def3.2} of the solution:
$$
D_t^\rho u_k(t) =  \sqrt{\frac{2}{l}}(D_{t}^{\rho}u(x,t), sin(\lambda_kx)) = \sqrt{\frac{2}{l}}\sigma(t)(u_{xx}(x,t), sin(\lambda_kx))-\sqrt{\frac{2}{l}}q(t)(u(x,t), sin(\lambda_kx))
$$
$$
=-\lambda_k^2 \sigma(t) u_k(t) - q(t)u_k(t).
$$

From the above equality and the second condition of \eqref{3.17}, we come to the following Cauchy problem:
$$
\left\{
\begin{aligned}
    &D_{t}^{\rho } u_{k}(t) + \lambda_k^2 \sigma(t) u_{k}(t)+q(t)u_{k}(t) = 0, \qquad  0<t\le T; \\
    & u_{k}(0) = 0.
\end{aligned}
\right.
$$

Using estimates \eqref{3.9} and \eqref{3.12}, we deduce that $u_k(t)=0$ for all $k$. Therefore, since the system $\{\sin(\lambda_k x)\}$ is complete in $L^2(0,l)$, it follows that $u(x,t)\equiv 0$.

\hfill $\square$

\section{INVESTIGATION OF THE INVERSE PROBLEM}

In this section, we study the inverse problem \eqref{1.1}–\eqref{1.4}, which concerns the determination of the pair of functions $\{u(x,t); q(t)\}$.

Following the work of M. Ruzhansky and his colleagues \cite{SerikbaevRuzhanskyTokmagambetov}, we introduce the following conditions on the functions $f(x,t)$ and $\phi(x)$.\\
\textbf{Assumption 2.} We assume that:\\
$(1) \qquad f(x,t)\in C([0,T];W_{2}^{4}(0,l))\;with\;{{f}_{k}}(t)\ge 0\;on\;[0,T]\;for\;all\;k,\;and\;f(0,t)=f(l,t)={{\left. \frac{{{\partial }^{2}}f(x,t)}{\partial {{x}^{2}}} \right|}_{x=0}}={{\left. \frac{{{\partial }^{2}}f(x,t)}{\partial {{x}^{2}}} \right|}_{x=l}}=0$; \\
$(2) \qquad \phi(x)\in W_2^4(0,l)  \,\, with \,\,\phi_k \ge 0  \,\,for \,\, all\,\,k, \,\, and \,\,\,{{\left. \frac{d^2\phi(x)}{d x^2} \right|}_{x=0}}={{\left. \frac{d^2\phi(x)}{d x^2} \right|}_{x=l}}=0.$ 

\begin{definition}\label{def4.2} A pair of functions $\{u(x,t),q(t)\}$ with the properties  $u(x,t) \in C_x^1([0,T] \times [0,l])$, $D_t^{\rho}u(x,t)$, $u_{xx}(x,t) \in C_x^1((0,T] \times [0,l])$, $q(t) \in C[0,T]$ and satisfying conditions \eqref{1.1}-\eqref{1.4} is called the solution of the inverse problem.

\end{definition}
    
We begin by presenting the following auxiliary lemma, which is needed before solving the inverse problem \eqref{1.1}–\eqref{1.4}.

According to Parseval’s equality, the following statement holds.

\begin{lemma}\label{lem4.4} Let Assumption 2 hold. Then the following estimates hold true:
\begin{equation} \label{4.1}
    \sum\limits_{k=1}^{\infty }{\lambda _{k}^{3}\left| {{f}_{k}}(t) \right|}\le \frac{l}{\sqrt{6}}{{\left\| f_{xxxx}^{(4)}(x,t) \right\|}_{{{L}_{2}}(0,l)}}
\end{equation}
and
\begin{equation}\label{4.2}
   \sum\limits_{k=1}^{\infty }{\lambda _{k}^{3}\left| {{\phi }_{k}} \right|}\le \frac{l}{\sqrt{6}}{{\left\| \phi^{(4)}(x) \right\|}_{{{L}_{2}}(0,l)}}.
\end{equation}
\end{lemma}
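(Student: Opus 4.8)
The plan is to reduce both bounds to a single standard computation: shift all four spatial derivatives from $f$ (resp.\ $\phi$) onto $\sin(\lambda_k x)$ by repeated integration by parts, and then close with the Cauchy--Schwarz inequality together with Parseval's equality. This is precisely the scheme behind Lemma~\ref{lem3.8}, pushed one derivative further.

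First I would integrate by parts four times in $f_k(t)=\sqrt{\frac{2}{l}}\int_0^l f(x,t)\sin(\lambda_k x)\,dx$. Because the trigonometric factor alternates $\sin\to\cos\to\sin\to\cos\to\sin$, after four steps one returns to a sine integral, namely
$$
f_k(t)=\frac{1}{\lambda_k^4}\,f_k^{(4)}(t),\qquad
f_k^{(4)}(t):=\sqrt{\frac{2}{l}}\int_0^l f_{xxxx}(x,t)\sin(\lambda_k x)\,dx .
$$
Here $f_k^{(4)}(t)$ is exactly the $k$-th sine Fourier coefficient of $f_{xxxx}(\cdot,t)$, so that $\lambda_k^3|f_k(t)|=\lambda_k^{-1}|f_k^{(4)}(t)|$.

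I would then apply Cauchy--Schwarz to the resulting series,
$$
\sum_{k=1}^{\infty}\lambda_k^3|f_k(t)|
=\sum_{k=1}^{\infty}\frac{1}{\lambda_k}|f_k^{(4)}(t)|
\le\left(\sum_{k=1}^{\infty}\frac{1}{\lambda_k^2}\right)^{1/2}\left(\sum_{k=1}^{\infty}|f_k^{(4)}(t)|^2\right)^{1/2}.
$$
Since $\lambda_k=\pi k/l$, the first factor evaluates explicitly: $\sum_{k\ge1}\lambda_k^{-2}=\frac{l^2}{\pi^2}\sum_{k\ge1}k^{-2}=\frac{l^2}{\pi^2}\cdot\frac{\pi^2}{6}=\frac{l^2}{6}$, which produces the constant $l/\sqrt6$. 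For the second factor, the system $\{\sqrt{2/l}\,\sin(\lambda_k x)\}_{k\ge1}$ is complete and orthonormal in $L_2(0,l)$, so Parseval's equality gives $\sum_{k\ge1}|f_k^{(4)}(t)|^2=\|f_{xxxx}(\cdot,t)\|_{L_2(0,l)}^2$, and \eqref{4.1} follows. Estimate \eqref{4.2} is then obtained verbatim, replacing $f(\cdot,t)$ by $\phi$ and using $\phi(0)=\phi(l)=0$ from \eqref{1.3} together with $\phi''(0)=\phi''(l)=0$ from Assumption~2.

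The one point requiring care is the vanishing of the boundary terms at each of the four integrations by parts, since the whole reduction $f_k=\lambda_k^{-4}f_k^{(4)}$ depends on all of them. Two of the four are automatic, because $\sin(\lambda_k x)$ vanishes at $x=0$ and $x=l$; the remaining two are exactly the conditions $f(0,t)=f(l,t)=0$ and $f_{xx}(0,t)=f_{xx}(l,t)=0$ (resp.\ the analogous conditions on $\phi$ and $\phi''$) supplied by Assumption~2. It is worth noting that four integrations return to the \emph{sine} coefficients, for which Parseval holds with equality; stopping at three (as in Lemma~\ref{lem3.8}) would land on cosine coefficients, whose system is orthonormal but not complete, which is why that estimate rests on Bessel's inequality rather than on Parseval's equality.
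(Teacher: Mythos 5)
Your proof is correct and is exactly the argument the paper has in mind: the paper offers no details beyond the remark ``According to Parseval's equality,'' and your four-fold integration by parts (using $f(0,t)=f(l,t)=f_{xx}(0,t)=f_{xx}(l,t)=0$ and the analogous conditions on $\phi$), followed by Cauchy--Schwarz with $\sum_k\lambda_k^{-2}=l^2/6$ and Parseval for the sine system, is the standard way to fill that in. Your side observation about why Lemma~\ref{lem3.8} rests on Bessel while this one uses Parseval is also accurate.
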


\begin{theorem}\label{thm4.3} Let Assumption 2 be satisfied. Then the inverse problem \eqref{1.1}–\eqref{1.4} admits a unique solution $\{u(x,t);q(t)\}$, provided that the following conditions hold:\\
$(1)\qquad \psi (t) \in C^1[0,T] \,\, and \,\,  \psi (t) \ge {{\psi }_{0}}>0$; \\
$(2) \qquad  {{\left. {{\phi }_{x}}(x) \right|}_{x=0}} = {{\left. \psi (t) \right|}_{t=0}}$; \\
$(3)\qquad 0 \le \frac{{{T}^{\rho }}}{\psi (t)}\left( {{f}_{x}}(0,t)-D_{t}^{\rho }\psi (t) \right)<\frac{{{T}^{\rho }}{{\pi }^{2}}}{{{l}^{2}}}({{M}_{\sigma }}-{{m}_{\sigma }})-\Gamma (\rho +1) $; \\
$(4) \qquad \frac{{{T}^{2\rho }}}{\Gamma (\rho +1)}{{\left\| f_{xxxx}^{(4)}(x,t) \right\|}_{C([0,T];{{L}_{2}}(0,l))}}+{{T}^{\rho }}{{\left\| \phi^{(4)}(x) \right\|}_{{{L}_{2}}(0,l)}}<\frac{{\sqrt{6}{\psi }_{0}}\Gamma (\rho +1)}{l{{M}_{\sigma }}}.$
\end{theorem}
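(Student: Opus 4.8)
The plan is to reduce the inverse problem to a single fixed point equation for the unknown coefficient $q(t)$ and then invoke the Banach fixed point Theorem \ref{thm2.10}. First I would derive an explicit representation of $q$. Differentiating equation \eqref{1.1} in $x$, evaluating at $x=0$, and inserting the overdetermination $u_x(0,t)=\psi(t)$ from \eqref{1.4}, one obtains
$$
D_t^\rho\psi(t)-\sigma(t)u_{xxx}(0,t)+q(t)\psi(t)=f_x(0,t),
$$
whence, since $\psi(t)\ge\psi_0>0$ by condition (1),
$$
q(t)=\frac{1}{\psi(t)}\Big(f_x(0,t)-D_t^\rho\psi(t)+\sigma(t)u_{xxx}(0,t)\Big),\qquad u_{xxx}(0,t)=-\sum_{k=1}^\infty\lambda_k^3 u_k(t),
$$
where $u_k$ solves \eqref{3.2} with this same $q$. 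This motivates the operator $Bq(t)$ equal to the right-hand side above, in which $u_k=u_k(\,\cdot\,;q)$ is produced by the forward solver of Section 3. A fixed point of $B$ in the admissible class is exactly the sought coefficient, and $u=u(\,\cdot\,;q)$ then furnishes the second component of the pair.

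Next I would take $X=\{q\in C[0,T]:n_q\le q(t)\le N_q\}$, a closed subset of $C[0,T]$, and verify that $B:X\to X$. Continuity of $Bq$ follows from the uniform convergence of $\sum_k\lambda_k^3 u_k(t)$, controlled by Lemma \ref{lem4.4}. Since $f_k\ge0$ and $\phi_k\ge0$ by Assumption 2, Lemma \ref{lem3.5} gives $u_k(t)\ge0$, so $u_{xxx}(0,t)=-\sum_k\lambda_k^3u_k(t)\le0$; combining the estimates \eqref{3.9}, \eqref{3.12} with Lemmas \ref{lem2.5} and \ref{lem4.4} yields
$$
0\le\sum_{k=1}^\infty\lambda_k^3u_k(t)\le\frac{l}{\sqrt6}\|\phi^{(4)}\|_{L_2}+\frac{l\,t^\rho}{\sqrt6\,\Gamma(\rho+1)}\|f_{xxxx}^{(4)}\|_{C([0,T];L_2)}=:C_0(t).
$$
The sign $u_{xxx}(0,t)\le0$ forces $Bq(t)\le\psi^{-1}\big(f_x(0,t)-D_t^\rho\psi(t)\big)$, and the upper part of condition (3) then gives $Bq\le N_q<\frac{(M_\sigma-m_\sigma)\pi^2}{l^2}$; the lower part of (3) together with the size bound $\tfrac{\sigma}{\psi}|u_{xxx}(0,t)|\le\frac{M_\sigma}{\psi_0}C_0(T)$, which condition (4) keeps below $\Gamma(\rho+1)/T^\rho$, delivers the lower bound $Bq\ge n_q>-\frac{m_\sigma\pi^2}{l^2}$. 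Hence $Bq\in X$.

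For the contraction estimate I would take $q_1,q_2\in X$, set $w_k=u_k(\,\cdot\,;q_1)-u_k(\,\cdot\,;q_2)$, and subtract the two copies of \eqref{3.2} to obtain
$$
D_t^\rho w_k+\lambda_k^2\sigma(t)w_k+q_1(t)w_k=-\big(q_1(t)-q_2(t)\big)\,u_k(t;q_2),\qquad w_k(0)=0.
$$
This is again a problem of type \eqref{3.7}, so estimate \eqref{3.9} applies with source $-(q_1-q_2)u_k(\,\cdot\,;q_2)$. Multiplying by $\lambda_k^3$, summing, and using $E_{\rho,\rho}\le1/\Gamma(\rho)$ (Lemma \ref{lem2.5}) together with the bound $C_0$ above, I expect
$$
\Big|\sum_{k=1}^\infty\lambda_k^3 w_k(t)\Big|\le\frac{C_0(T)\,t^\rho}{\Gamma(\rho+1)}\,\|q_1-q_2\|_{C[0,T]},
$$
so that $\|Bq_1-Bq_2\|_{C[0,T]}\le\frac{M_\sigma T^\rho}{\psi_0\,\Gamma(\rho+1)}\,C_0(T)\,\|q_1-q_2\|_{C[0,T]}$. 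A direct rearrangement shows that this coefficient is strictly less than $1$ precisely under condition (4). Banach's Theorem \ref{thm2.10} then yields a unique $q^*\in X$, and $u=u(\,\cdot\,;q^*)$ from Theorem \ref{thm3.3} completes the pair; uniqueness of the pair follows from uniqueness of the fixed point.

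One verification remains, and it is where the compatibility condition (2) enters: I must confirm that the fixed point actually reproduces the overdetermination \eqref{1.4}. Writing $\chi(t)=u_x(0,t;q^*)$ and substituting $q^*=Bq^*$ into the $x$-differentiated equation at $x=0$ shows that $\chi-\psi$ solves $D_t^\rho(\chi-\psi)+q^*(\chi-\psi)=0$ with $(\chi-\psi)(0)=\phi_x(0)-\psi(0)=0$, whence $\chi\equiv\psi$ by the uniqueness already established for \eqref{3.2}. I expect the main obstacle to be the self-mapping step: keeping $Bq$ inside $[n_q,N_q]$ requires simultaneously the correct sign of $u_{xxx}(0,t)$ (hence the nonnegativity hypotheses $f_k,\phi_k\ge0$ feeding Lemma \ref{lem3.5}) and sharp enough magnitude control of $\sum_k\lambda_k^3u_k$, and it is exactly the matching of these two bounds against the admissibility window $-\frac{m_\sigma\pi^2}{l^2}<q<\frac{(M_\sigma-m_\sigma)\pi^2}{l^2}$ that dictates the somewhat unusual form of conditions (3) and (4).
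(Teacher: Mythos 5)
Your strategy is the paper's: differentiate \eqref{1.1} in $x$, evaluate at $x=0$, use the datum \eqref{1.4} to obtain \eqref{4.3}, recast this as a fixed-point equation for $q$ whose right-hand side involves $\sum_k\lambda_k^3u_k(t;q)$, control that series via \eqref{3.9}, \eqref{3.12} and Lemma \ref{lem4.4} (this is the content of Lemma \ref{lem4.5}), and close with Theorem \ref{thm2.10}; your contraction constant is exactly the paper's $C(T)$, which condition (4) makes less than one. Your closing verification that the fixed point actually reproduces the flux datum \eqref{1.4} (using the compatibility condition (2) and uniqueness for \eqref{3.2}) is a step the paper omits entirely, and since the reduction to \eqref{4.3} is a priori only a necessary condition, adding it is an improvement.

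There is, however, one step that does not close as written: the self-mapping bound from below. You keep the algebraically correct sign, $q=q_0+\sigma\psi^{-1}u_{xxx}(0,t)=q_0-\sigma\psi^{-1}\sum_k\lambda_k^3u_k$, whereas the paper's \eqref{4.4} carries $+\sum_k\lambda_k^3u_k$. This flips which edge of the admissibility window $\bigl(-m_\sigma\pi^2/l^2,\ (M_\sigma-m_\sigma)\pi^2/l^2\bigr)$ is binding. With the paper's sign, $u_k\ge0$ (Lemma \ref{lem3.5}) gives $L[q]\ge q_0\ge0$ for free and conditions (3)--(4) are spent on the upper edge; with your sign the upper edge is free, but your lower-bound chain only yields $Bq>-\,\Gamma(\rho+1)/T^{\rho}$, and nothing in the hypotheses forces $\Gamma(\rho+1)/T^{\rho}\le m_\sigma\pi^2/l^2$ --- condition (3) gives only $\Gamma(\rho+1)/T^{\rho}<(M_\sigma-m_\sigma)\pi^2/l^2$, which suffices for the stated window only if $M_\sigma\le 2m_\sigma$. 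So either an additional smallness relation between $T^{\rho}$, $m_\sigma$ and $l$ must be imposed, or the admissible set $X$ must be re-cut to $n_q=-\Gamma(\rho+1)/T^{\rho}$ with a check that the forward theory of Section 3 (which needs $\lambda_k^2 m_\sigma+n_q>0$) still applies. The paper avoids this issue only by virtue of the sign discrepancy between \eqref{4.3} and \eqref{4.4}, so the gap you have inherited is really a gap in the theorem's hypotheses rather than in your method.
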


To determine the function $q(t)$, we first differentiate equation \eqref{1.1} with respect to $x$. Taking into account the additional condition \eqref{1.4}, we obtain the following equality:

\begin{equation}\label{4.3}
    D_{t}^{\rho }\psi (t)-\sigma (t){{\left. {{u}_{xxx}}(x,t) \right|}_{x=0}}+q(t)\psi (t)={{\left. {{f}_{x}}(x,t) \right|}_{x=0}},
\end{equation}
where
$$
{{\left. {{u}_{xxx}}(x,t) \right|}_{x=0}}=-\sum\limits_{k=1}^{\infty }{\lambda _{k}^{3}{{u}_{k}}(t)}.
$$

It follows from equation \eqref{4.3} that the function $q(t)$ is given by:
\begin{equation}\label{4.4}
    q(t)=q_0(t)+\frac{\sigma (t)}{\psi (t)}\sum\limits_{k=1}^{\infty }{\lambda _{k}^{3}{{u}_{k}}(t)},
\end{equation}
where $q_0(t)=\frac{1}{\psi (t)}\left[ {{f}_{x}}(0,t)-D_{t}^{\rho }\psi (t) \right]$ and $u_k(t)$ denotes the solution of the integral equation \eqref{3.4}, which depends on the function $q(t)$.

\begin{lemma}\label{lem4.5} Suppose that Assumption 2 holds. Then the following two statements are equivalent:\\
$(1) \qquad    \sum\limits_{k=1}^{\infty }{\lambda _{k}^{3}\left| {{u}_{k}}(t) \right|}\le \frac{l{{T}^{\rho }}}{\sqrt{6}\Gamma (\rho +1)}{{\left\| f_{xxxx}^{(4)}(x,t) \right\|}_{C([0,T];L_2(0,l))}} + \frac{l}{\sqrt{6}}{{\left\| \phi^{(4)}(x) \right\|}_{{{L}_{2}}(0,l)}};$\\\\
$(2) \qquad Condition \; (2) \; of \; Assumption \;1:$

\qquad  \quad$ q(t)\in\left\{  q(t) \in C[0,T] :\,-\frac{m_{\sigma}{{\pi }^{2}}}{{{l}^{2}}}< n_q \le q(t) \le N_q< \frac{(M_{\sigma}-m_{\sigma}){{\pi }^{2}}}{{{l}^{2}} }\right\}.$
\end{lemma}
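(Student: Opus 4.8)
The plan is to establish the two implications separately: the forward-problem estimates of Section~3 handle the direction $(2)\Rightarrow(1)$, while the representation \eqref{4.4} together with the auxiliary hypotheses of Theorem~\ref{thm4.3} handle the harder direction $(1)\Rightarrow(2)$. The common starting point is the observation that, under Assumption~2, we have $f_k(t)\ge 0$ and $\phi_k\ge 0$ for every $k$, so Lemma~\ref{lem3.5}$(b)$ gives $u_k(t)\ge 0$ on $[0,T]$. Consequently $|u_k(t)|=u_k(t)$ and $\sum_{k}\lambda_k^3|u_k(t)|=\sum_{k}\lambda_k^3 u_k(t)$, which is precisely the series appearing in \eqref{4.4}; this identification is what makes statement (1) a bound on the quantity that controls $q(t)$.

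For $(2)\Rightarrow(1)$ I would assume condition (2) of Assumption~1 and combine the pointwise bounds of Lemmas~\ref{lem3.6} and \ref{lem3.7} (valid precisely because $q(t)\ge n_q$ and $\sigma(t)\ge m_\sigma$) with the decomposition $u_k=V_k+W_k$. Multiplying by $\lambda_k^3$ and summing, the $W_k$–contribution is controlled by estimating $E_{\rho,1}\le 1$ via Lemma~\ref{lem2.5} and invoking \eqref{4.2}, while the $V_k$–contribution is controlled by estimating $E_{\rho,\rho}\le 1/\Gamma(\rho)$ via Lemma~\ref{lem2.5}, using \eqref{4.1}, and evaluating $\int_0^t(t-s)^{\rho-1}\,ds=t^\rho/\rho\le T^\rho/\rho$. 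Since $\rho\,\Gamma(\rho)=\Gamma(\rho+1)$, the two pieces assemble exactly into the right-hand side of statement (1). This direction is essentially routine bookkeeping once Lemmas~\ref{lem3.6}, \ref{lem3.7} and \ref{lem4.4} are in hand, the only care being that $\lambda_k^2 m_\sigma+n_q>0$ for every $k$ (guaranteed by $n_q>-m_\sigma\pi^2/l^2=-m_\sigma\lambda_1^2$), which legitimizes applying Lemma~\ref{lem2.5} with positive argument.

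The direction $(1)\Rightarrow(2)$ is the heart of the matter, and here I would work from the explicit formula \eqref{4.4}, namely $q(t)=q_0(t)+\frac{\sigma(t)}{\psi(t)}\sum_{k}\lambda_k^3 u_k(t)$. For the lower bound, non-negativity of $u_k$ forces $q(t)\ge q_0(t)$, and condition (3) of Theorem~\ref{thm4.3} gives $q_0(t)\ge 0>-m_\sigma\pi^2/l^2$. For the upper bound I would estimate $\sigma(t)/\psi(t)\le M_\sigma/\psi_0$ (using condition (1) of Assumption~1 and $\psi\ge\psi_0$) and insert the bound from statement (1); the crucial step is then to show that condition (4) of Theorem~\ref{thm4.3}, after multiplying through by $T^\rho$ and using $\Gamma(\rho+1)=\rho\,\Gamma(\rho)$, forces this extra term to be at most $\Gamma(\rho+1)/T^\rho$, so that $q(t)\le q_0(t)+\Gamma(\rho+1)/T^\rho$. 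Finally, condition (3) of Theorem~\ref{thm4.3}, rewritten as $T^\rho q_0(t)<\frac{T^\rho\pi^2}{l^2}(M_\sigma-m_\sigma)-\Gamma(\rho+1)$, yields exactly $q_0(t)+\Gamma(\rho+1)/T^\rho<\frac{(M_\sigma-m_\sigma)\pi^2}{l^2}$, placing $q(t)$ inside the admissible interval of condition (2).

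I expect the main obstacle to be this last direction: it is not self-contained within Assumption~2 alone but genuinely requires the calibrated hypotheses (1), (3) and (4) of Theorem~\ref{thm4.3}, and the constants appearing in conditions (3) and (4) must be tracked precisely so that the two estimates dovetail to reproduce the exact endpoint $\frac{(M_\sigma-m_\sigma)\pi^2}{l^2}$. In particular, the smallness condition (4) is calibrated so that the contribution of the series term never pushes $q(t)$ past the upper threshold, which is exactly why the two statements are equivalent rather than one merely implying the other.
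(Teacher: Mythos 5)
Your proposal is correct and follows essentially the same route as the paper: the same decomposition $u_k=V_k+W_k$ with Lemmas \ref{lem3.6}, \ref{lem3.7}, \ref{lem2.5} and \eqref{4.1}--\eqref{4.2} for statement (1), and the same use of formula \eqref{4.4} together with conditions (3) and (4) of Theorem \ref{thm4.3} to pin $q(t)$ between $0$ and $q_0(t)+\Gamma(\rho+1)/T^{\rho}<(M_\sigma-m_\sigma)\pi^2/l^2$ for statement (2). Your write-up is in fact somewhat cleaner than the paper's, since it separates the two implications explicitly and correctly attributes $q_0(t)\ge 0$ to condition (3) of Theorem \ref{thm4.3}.
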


\begin{proof}
    
Using the decomposition $u_k(t) = V_k(t) + W_k(t)$, we obtain:
$$
\sum\limits_{k=1}^{\infty }{\lambda _{k}^{3}\left| {{u}_{k}}(t) \right|}\le \sum\limits_{k=1}^{\infty }{\lambda _{k}^{3}\left| {{V}_{k}}(t) \right|}+\sum\limits_{k=1}^{\infty }{\lambda _{k}^{3}\left| {{W}_{k}}(t) \right|}={{K}_{1}}+{{K}_{2}},
$$
where $ K_1=\sum\limits_{k=1}^{\infty }{\lambda _{k}^{3}\left| {{V}_{k}}(t) \right|}$ and $ K_2=\sum\limits_{k=1}^{\infty }{\lambda _{k}^{3}\left| {{W}_{k}}(t) \right|}$.

First, we estimate the first sum $K_1$ using \eqref{3.9} and \eqref{4.1}:
$$
K_1=\sum\limits_{k=1}^{\infty }{\lambda _{k}^{3}\left| {{V}_{k}}(t) \right|}\le \int_{0}^{t}{\sum\limits_{k=1}^{\infty }{\lambda _{k}^{3}\left| {{f}_{k}}(s) \right|}}{{(t-s)}^{\rho -1}}{{E}_{\rho ,\rho }}\left( -({\lambda^2_{k}}{{m}_{\sigma }+n_q)}{{(t-s)}^{\rho }} \right)ds
$$
$$
\le \frac{l}{\sqrt{6}}{{\left\| f_{xxxx}^{(4)}(x,t) \right\|}_{C([0,T];L_2(0,l))}} \int_{0}^{t}{{{(t-s)}^{\rho -1}}{{E}_{\rho ,\rho }}\left( -({\lambda^2_{k}}{{m}_{\sigma }+n_q)}{{(t-s)}^{\rho }} \right)ds}.
$$

By applying Lemma \ref{lem2.5}, we obtain:
\begin{equation}\label{4.5}
K_1 \le \frac{l{{T}^{\rho }}}{\sqrt{6}\Gamma (\rho +1)}{{\left\| f_{xxxx}^{(4)}(x,t) \right\|}_{C([0,T];L_2(0,l))}}.
\end{equation}

Second, we estimate the sum $K_2$ by using \eqref{3.12}:
$$
{{K}_{2}}=\sum\limits_{k=1}^{\infty }{\lambda _{k}^{3}\left| {{W}_{k}}(t) \right|}\le \sum\limits_{k=1}^{\infty }{\lambda _{k}^{3}\left| {{\phi }_{k}} \right|{{E}_{\rho ,1}}\left( -(\lambda _{k}^{2}{{m}_{\sigma }+n_q)}{{t}^{\rho }} \right)} \le \sum\limits_{k=1}^{\infty }{\lambda _{k}^{3}\left| {{\phi }_{k}} \right|}.
$$

By applying \eqref{4.2}, it follows that:

\begin{equation}\label{4.6}
    K_2 \le \frac{l}{\sqrt{6}}{{\left\| \phi^{(4)}(x) \right\|}_{{{L}_{2}}(0,l)}}.
\end{equation}

Based on the estimates given in \eqref{4.5} and \eqref{4.6}, we arrive at the following inequality:
$$
\sum\limits_{k=1}^{\infty }{\lambda _{k}^{3}\left| {{u}_{k}}(t) \right|}\le \frac{l{{T}^{\rho }}}{\sqrt{6}\Gamma (\rho +1)}{{\left\| f_{xxxx}^{(4)}(x,t) \right\|}_{C([0,T];L_2(0,l))}} + \frac{l}{\sqrt{6}}{{\left\| \phi^{(4)}(x) \right\|}_{{{L}_{2}}(0,l)}}.
$$

Next, condition (2) of Assumption 1 is examined. From Assumption 2 and conditions (1)–(2) of Theorem \ref{thm4.3}, it follows that $q(t) \ge 0$. Consequently, the following inequality is verified:
$$
{{q}_{0}}(t)+\frac{\sigma (t)}{\psi (t)}\sum\limits_{k=1}^{\infty }{\lambda _{k}^{3}{{u}_{k}}(t)}<\frac{({{M}_{\sigma }}-{{m}_{\sigma }}){{\pi }^{2}}}{{{l}^{2}}}.
$$

From condition (1) of Lemma \ref{lem4.5}, it follows that:
$$
{{q}_{0}}(t)+\frac{l{{M}_{\sigma }}{{T}^{\rho }}}{\sqrt{6}{{\psi }_{0}}\Gamma (\rho +1)}{{\left\| f_{xxxx}^{(4)}(x,t) \right\|}_{C([0,T];{{L}_{2}}(0,l))}}
$$
$$
+\frac{l{{M}_{\sigma }}}{\sqrt{6}{{\psi }_{0}}}{{\left\| {{\phi }^{(4)}}(x) \right\|}_{{{L}_{2}}(0,l)}}<\frac{({{M}_{\sigma }}-{{m}_{\sigma }}){{\pi }^{2}}}{{{l}^{2}}}.
$$

According to condition (4) of Theorem \ref{thm4.3}, the inequality takes the following form:
$$
{{q}_{0}}(t)+\frac{\Gamma (\rho +1)}{{{T}^{\rho }}}<\frac{({{M}_{\sigma }}-{{m}_{\sigma }}){{\pi }^{2}}}{{{l}^{2}}}.
$$

According to condition (3) of Theorem \ref{thm4.3}, the above inequality holds. Consequently, condition (2) of Assumption 1 is satisfied.

\end{proof} 

We introduce an operator $L$, defining it by the right-hand side of the equation \eqref{4.4}:
\begin{equation} \label{4.7}
     L[q](t)=q_0(t)+\frac{\sigma (t)}{\psi (t)}\sum\limits_{k=1}^{\infty }{\lambda _{k}^{3}{{u}_{k}}(t)}. 
\end{equation}

Then, the equation \eqref{4.4} is written in more convenient form as
$$
     q(t)=L[q](t).
$$

To apply Theorem \ref{thm2.10}, we need to prove the following:

\qquad $(a)$ If $q(t)\in C[0,T]$, then $L[q](t) \in C[0,T]$;

\qquad $(b)$ For any $\tilde{q}(t) , \bar{q}(t) \in C[0,T]$, the following estimate holds:
$$
   \| L\tilde{q}(t) - L\bar{q}(t)\|_{C[0,T]} \leq \beta \|\tilde{q}(t) - \bar{q}(t)\|_{C[0,T]}, \quad \beta \in (0,1). 
$$

If $q(t) \in C[0,T]$, then to show that $L[q](t) \in C[0,T]$, we apply condition (1) of Lemma \ref{lem4.5}:
$$
{{\left\| L[q](t) \right\|}_{C[0,T]}}\le {{\left\| {{q}_{0}}(t) \right\|}_{C[0,T]}}+\frac{{{M}_{\sigma }}}{{{\psi }_{0}}}\sum\limits_{k=1}^{\infty }{\lambda _{k}^{3}\left| {{u}_{k}}(t) \right|}
$$
$$
\le {{\left\| {{q}_{0}}(t) \right\|}_{C[0,T]}}+\frac{l{{M}_{\sigma }}{{T}^{\rho }}}{\sqrt{6}{{\psi }_{0}}\Gamma (\rho +1)}{{\left\| f_{xxxx}^{(4)}(x,t) \right\|}_{C([0,T];{{L}_{2}}(0,l))}}+\frac{l{{M}_{\sigma }}}{\sqrt{6}{{\psi }_{0}}}{{\left\| {{\phi }^{(4)}}(x) \right\|}_{{{L}_{2}}(0,l)}}.
$$

To prove part (b), we consider the following estimate for the two pairs of functions $\{ \tilde{u}_k (t);\tilde{q}(t) \}$ and $\{ \bar{u}_k (t);\bar{q}(t) \}$:
$$
    \left| L[\tilde{q}](t)-L[\bar{q}](t) \right|\le \frac{{{M}_{\sigma }}}{{{\psi }_{0}}}\sum\limits_{k=1}^{\infty }{\lambda _{k}^{3}\left| \tilde{u}_k (t)-\bar{u}_k (t) \right|}.
$$

The following inequality holds for the difference between the functions $\tilde{u}_k(t)$ and $\bar{u}_k(t)$:
$$
     \left| \tilde{u}_k (t)-\bar{u}_k (t) \right| \le \int_{0}^{t} \left| \tilde{q}(s) - \bar{q}(s) \right |\left|{\bar{u}_{k}}(s) \right |{{(t-s)}^{\rho -1}}{{E}_{\rho ,\rho }}\left( -(\lambda _{k}^{2}{{m}_{\sigma }} +n_q){{(t-s)}^{\rho }} \right)ds.
$$
From this inequality and Lemma \ref{lem4.5}, the following is obtained:
$$
\left| L[\tilde{q}](t)-L[\bar{q}](t) \right|\le \frac{M_{\sigma}}{\phi_0\Gamma (\rho )}{{\left\| \tilde{q}(t) - \bar{q}(t) \right\|}_{C[0,T]}}\int_{0}^{t}{\sum\limits_{k=1}^{\infty }{\lambda _{k}^{3}\left| {{{\bar{u}}}_{k}}(t) \right|}}{{(t-s)}^{\rho -1}}ds
$$
$$
\le C(T) {{\left\| \tilde{q}(t) - \bar{q}(t) \right\|}_{C[0,T]}},
$$
where
$$
C(T) =\frac{l{{M}_{\sigma }}{{T}^{\rho }}}{{\sqrt{6}{\psi }_{0}}\Gamma (\rho +1)}\left( \frac{{{T}^{\rho }}}{\Gamma (\rho +1)}{{\left\| f_{xxxx}^{(4)}(x,t) \right\|}_{C([0,T];{{L}_{2}}(0,l))}}+{{\left\| \phi^{(4)}(x) \right\|}_{{{L}_{2}}(0,l)}} \right).
$$

According to condition (4) of Theorem \ref{thm4.3}, it follows that $C(T) < 1$.

\hfill $\square$

\section{Acknowledgments} The authors acknowledge financial support from the Ministry of Higher Education, Science and Innovation of the Republic of Uzbekistan, Grant No. F-FA-2021-424.

%\bmhead{Financial and non-financial interests} The author have no relevant financial or non-financial interests to disclose.

\label{lastpage}
%\endinput

\end{document}